\newcommand{\refS}[1]{Section~\ref{S:#1}}
\newcommand{\refSS}[1]{Subsection~\ref{SS:#1}}
\newcommand{\refeq}[1]{Eq.~(\ref{#1})}
\newcommand{\refig}[1]{Fig.~\ref{F:#1}}
\providecommand{\tbf}{\textbf}     % text bold series
\providecommand{\mrm}{\mathrm}     % math roman upright
\providecommand{\C}{\mathcal}% math caligraphic --- caps only
\providecommand{\D}{\mathbb}% math blackboard double --- caps only
\providecommand{\E}{\mathscr}% math script --- caps only
\providecommand{\F}{\mathfrak}% math Fraktur
\newtheorem{thm}{Theorem}
\newcommand{\ignore}[1]{}
\newcommand{\vphi}{\varphi}
\newcommand{\vpi}{\varpi}
\newcommand{\vek}[1]{\mathchoice{\displaystyle\boldsymbol{#1}}
{\textstyle\boldsymbol{#1}}{\scriptstyle\boldsymbol{#1}}
{\scriptscriptstyle\boldsymbol{#1}}}
\newcommand{\tnb}[1]{\mathchoice{\displaystyle\mathboldsans{#1}}
{\textstyle\mathboldsans{#1}}{\scriptstyle\mathboldsans{#1}}
{\scriptscriptstyle\mathboldsans{#1}}}
\newcommand{\vtil}[1]{\vek{\tilde{#1}}}
\newcommand{\EXP}[1]{\mathbb{E}\left(#1\right)}
\newcommand{\dlangle}{\langle\negthinspace\langle}
\newcommand{\drangle}{\rangle\negthinspace\rangle}
\newcommand{\tr}{\mathop{\mathrm{tr}}\nolimits}
\newcommand{\spn}{\mathop{\mathrm{span}}\nolimits}
\newcommand{\dd}{\partial}
\newcommand{\di}{\mathrm{d}}
\newcommand{\ii}{\mathchoice{\displaystyle\mathrm i}
{\textstyle\mathrm i}{\scriptstyle\mathrm i}
{\scriptscriptstyle\mathrm i}}
\newcommand{\citep}[1]{\cite{#1}}
\newcommand{\ip}[2]{\langle #1, #2 \rangle}
\newcommand{\ipd}[2]{\dlangle #1, #2 \drangle}
\newcommand{\nd}[1]{\| #1 \|}
\newcommand{\nt}[1]{\|\negthinspace| #1 |\negthinspace\|}
\newcommand{\var}{\mrm{var}}
\newcommand{\cov}{\mrm{cov}}
\definecolor{myred}{rgb}{1, 0.2, 0.2}
\newcommand{\authorhgm}{Hermann G. Matthies}
\newcommand{\authoral}{Alexander Litvinenko}
\newcommand{\authorbr}{Bojana V. Rosi\'c}
\newcommand{\authorez}{Elmar Zander}
\newcommand{\affilwire}{Institute of Scientific Computing \authorcr
                        Technische Universit\"at Braunschweig, Germany}
\newcommand{\affilkaust}{KAUST, Thuwal, Saudi Arabia}
\newcommand{\thetitle}{Parameter Estimation via Conditional Expectation --- A Bayesian Inversion}
\newcommand{\textdate}{\small Dedicated to Pierre Ladev\`eze on the occasion of his 70th birthday.}
\title{\thetitle\thanks{Partly supported by the Deutsche
          Forschungsgemeinschaft (DFG) through SFB 880.}}
\author[a]{\authorhgm\thanks{corresponding author}}
\author[a]{\authorez}
\author[a]{\authorbr}
\author[b]{\authoral}
\affil[a]{\affilwire}
\affil[b]{\affilkaust}
\date{\textdate}
\begin{document}

\maketitle

\begin{abstract}
When a mathematical or computational model is used to analyse some
system, it is usual that some parameters resp.\ functions or fields
in the model are not known, and hence uncertain.  These parametric
quantities are then identified by actual observations of the
response of the real system.  In a probabilistic setting, Bayes's
theory is the proper mathematical background for this identification
process.  The possibility of
being able to compute a conditional expectation turns out to be
crucial for this purpose.  We show how this theoretical background
can be used in an actual numerical procedure, and shortly discuss
various numerical approximations.
\end{abstract}

\section{Introduction} \label{S:intro}
The fitting of parameters resp.\ functions or fields --- these will all be
for the sake of brevity be referred to as parameters –-- in a
mathematical computational model is usually denoted as an inverse problem,
in contrast to predicting the output or state resp.\ response of the system
given certain inputs, which is called the forward problem.  In the
inverse problem, the response of the model is compared to the
response of the system.  The system may be a real world system, or
just another computational model --- usually a more complex one.
One then tries in various ways to match the model response with
the system response.

Typical deterministic procedures include such methods as
minimising the mean square error (MMSE), leading to optimisation
problems in the search of optimal parameters.  As the inverse
problem is typically ill-posed --- the observations do not do
contain enough information to uniquely determine the parameters ---
some additional oinformation has to be added to select a unique solution.
In the deterministic setting on then typically invokes additional
ad-hoc procedures like Tikhonov-regularisation
 \citep{Tikhonov1977, Tikhonov1995, Engl1987, Engl2000}.

In a probabilistic setting (e.g.\ \citep{jaynes03, Tarantola2004}
and references therein) the ill-posed problem becomes well-posed (e.g.\
\citep{Stuart2010}).  This is achieved at a cost though.  The unknown
parameters are considered as uncertain, and modelled as random
variables (RVs).  The information added is hence the \emph{prior}
probability distribution.  This means on one hand that the result of the
identification is a probability distribution, and not a single
value, and on the other hand the computational work may be
increased substantially, as one has to deal with RVs.  That the
result is a probability distribution may be seen as additional
information though, as it offers an assessment of the residual
uncertainty after the identification procedure, something which
is not readily available in the deterministic setting.  The
probabilistic setting thus can be seen as modelling our knowledge
about a certain situation --- the value of the parameters --- in the
language of probability theory, and using the observation to
update our knowledge, (i.e.\ the probabilistic description) by
\emph{conditioning} on the observation.

The key probabilistic background for this is Bayes's theorem in
the formulation of Laplace  \citep{jaynes03, Tarantola2004}.  It is well known
that the Bayesian update is theoretically based on the notion of
conditional expectation (CE) \citep{Bobrowski2006/087}.  Here we take an
approach which takes CE not only as a theoretical basis, but
also as a basic computational tool.  This may be seen as somewhat
related to the ``Bayes linear'' approach \citep{Goldstein2007, Kennedy01},
which has a linear approximation of CE as its basis, as will
be explained later.

In many cases, for example when tracking a dynamical system, the
updates are performed sequentially step-by-step, and for the next
step one needs not only a probability distribution in order to
perform the next step, but a random variable which may be evolved
through the state equation.  Methods on how to transform the prior RV
into the one which is conditioned on the observation will be
discussed as well \citep{HgmEzBvrAlOp15}.
This approach is very different to the very frequently
used one which refers to Bayes's theorem in terms of densities and
likelihood functions, and typically employs Markov-chain Monte
Carlo (MCMC) methods to sample from the posterior
(see e.g.\ \citep{Hastings1970, Marzouk2007, BvrAkJsOpHgm11}).

\section{Mathematical set-up}   \label{S:math-set-up}
Let us start with an example to have a concrete idea of what the
whole procedure is about.  Imagine a system described by a
diffusion equation, e.g.\ the diffusion of heat through a solid
medium, or even the seepage of groundwater through porous rocks
and soil: 
\begin{align} \label{eq:diff-eq} 
  \frac{\dd \tilde{\upsilon}}{\dd t}(x,t) &= \dot{\tilde{\upsilon}}(x,t) = \nabla\cdot
         (\kappa(x,\tilde{\upsilon}) \nabla \tilde{\upsilon}(x,t)) + \eta(x,t),\\
         \label{eq:diff-inibc} \tilde{\upsilon}(x,0) &= \tilde{\upsilon}_0(x) \quad \text{plus b.c.}
\end{align}
Here $x\in\C{G}$ is a spatial coordinate in the domain $\C{G} \subset
\D{R}^n$, $t \in [0,T]$ is the time, $\tilde{\upsilon}$ a scalar
function describing the diffusing quantity, $\kappa$ the (possibly
non-linear) diffusion tensor, $\eta$ external sources or
sinks, and $\nabla$ the Nabla operator.  Additionally assume
appropriate boundary conditions so that \refeq{eq:diff-eq} is well-posed.
Now, as often in such situations, imagine that we do not
know the initial conditions $\tilde{\upsilon}_0$ in \refeq{eq:diff-inibc} precisely, nor
the diffusion tensor $\kappa$, and maybe not even the driving source $\eta$,
i.e.\ there is some uncertainty attached as to what their precise values are.

\subsection{Data model}    \label{SS:data-m}
A more abstract setting which subsumes \refeq{eq:diff-eq} is to
view $\tilde{\upsilon}(t) := \tilde{\upsilon}(\cdot,t)$ as an element of a Hilbert-space
(for the sake of simplicity) $\C{V}$.  In the particular case of \refeq{eq:diff-eq}
one could take $\C{V}=\mrm{H}^1_E(\C{G})$, a closed subspace of the
Sobolev space $\mrm{H}^1(\C{G})$ incorporating the essential boundary
conditions.  Hence we may view \refeq{eq:diff-eq} and \refeq{eq:diff-inibc}
as an example of
\begin{equation} \label{eq:abstr-sys-I} 
 \frac{\di \tilde{\upsilon}}{\di t}(t) = \dot{\tilde{\upsilon}}(t) = A_{\C{V}}(q;\tilde{\upsilon}(t)) + \eta(q;t), 
   \quad \tilde{\upsilon}(0) = \tilde{\upsilon}_0(q) \in \C{V}, \; t\in[0,T].
\end{equation}
Here $A_{\C{V}}:\C{Q}\times\C{V}\to\C{V}$ is a possibly non-linear
operator in $\tilde{\upsilon}\in\C{V}$, and $q\in\C{Q}$ are the
parameters (like $\kappa$, $\tilde{\upsilon}_0$, or $\eta$, which more accurately
would be described as functions of $q$), where we assume for simplicity again
that $\C{Q}$ is some Hilbert space.
Both $A_{\C{V}}$, $\tilde{\upsilon}_0$, and $\eta$ could involve some noise, so that
one may view \refeq{eq:abstr-sys-I} as an instance of a stochastic
evolution equation.  This is our model of the system generating the
observed data, which we assume to be well-posed.

Hence assume further that we may observe a function $\hat{Y}(q;\tilde{\upsilon}(t))$
of the state $\tilde{\upsilon}(t)$
and the parameters $q$, i.e.\ $\hat{Y}:\C{Q}\times\C{V}\to\C{Y}$,
where we assume that $\C{Y}$ is a Hilbert space.
To make things simple, assume additionally that we observe $\hat{Y}(q;\tilde{\upsilon}(t))$
at regular time intervals $t_n = n \cdot \mrm{\Delta} t$,
i.e.\ we observe $y_{n}=\hat{Y}(q;\tilde{\upsilon}_n)$, where $\tilde{\upsilon}_n := \tilde{\upsilon}(t_n)$.
Denote the solution operator $\Upsilon$ of \refeq{eq:abstr-sys-I} as
\begin{equation} \label{eq:sol-op-I}  
 \tilde{\upsilon}_{n+1}= \Upsilon(t_{n+1},q,\tilde{\upsilon}_n,t_n,\eta),
\end{equation}
advancing the solution from $t_n$ to $t_{n+1}$.  Hence we are observing
\begin{equation}   \label{eq:obs-I}
 \hat{y}_{n+1} = \hat{h}(\hat{Y}(q;\Upsilon(t_{n+1},q,\tilde{\upsilon}_n,t_n,\eta)),v_n),
\end{equation}
where some noise $v_n$ --- inaccuracy of the observation --- has been
included, and $\hat{h}$ is an appropriate observation operator.  A simple
example is the often assumed additive noise
\[ \hat{h}(y,v) := y + S_{\C{V}}(\tilde{\upsilon})v,\]
where $v$ is a random vector, and for each $\tilde{\upsilon}$, $S_{\C{V}}(\tilde{\upsilon})$
is a bounded linear map to $\C{Y}$.

\subsection{Identification model}      \label{SS:ident-m}
Now that the model generating the data has been described, it
is the appropriate point to introduce the identification
model.  Similarly as before in \refeq{eq:abstr-sys-I}, we have a model
\begin{equation} \label{eq:abstr-sys-II} 
 \frac{\di u}{\di t}(t) = \dot{u}(t) = A(q;u(t)) + \eta(q;t), 
   \quad u(0) = u_0(q) \in \C{U}, \; t\in[0,T],
\end{equation}
which depends on the same parameters $q$ as in \refeq{eq:abstr-sys-I},
to be used for the identification, which we shall only write in its
abstract from.  Hence we assume that we can actually integrate
\refeq{eq:abstr-sys-II} from $t_n$ to $t_{n+1}$ with its solution operator $U$
\begin{equation} \label{eq:sol-op-II} 
 u_{n+1}= U(t_{n+1},q,u_n,t_n,\eta).
\end{equation}
Observe that the two spaces $\C{V}$ in \refeq{eq:abstr-sys-I} and $\C{U}$
in \refeq{eq:abstr-sys-II} are not the same, as in general we do not know
$\tilde{\upsilon} \in \C{V}$, we only have observations $y\in\C{Y}$.

As later not only the state $u\in\C{U}$ in \refeq{eq:abstr-sys-II}
has to be identified, but also the parameters $q$, and the identification
may happen sequentially, i.e.\ our estimate of $q$ will change from step $n$
to step $n+1$, we shall introduce an ``extended'' state vector 
$x=(u,q)\in\C{X}:=\C{Q}\times\C{U}$ and describe the change from
$n$ to $n+1$ by
\begin{equation} \label{eq:abs-II-ow} 
  x_{n+1} = (u_{n+1},q_{n+1}) = \hat{f}(x_n)
           := (U(t_{n+1},q_n,u_n,t_n,\eta),q_n).
\end{equation}
Let us explicitly introduce a noise $w\in\C{W}$ to cover the
stochastic contribution or modelling errors between \refeq{eq:abstr-sys-II}
and \refeq{eq:abstr-sys-I}, so that we set
\begin{equation} \label{eq:abs-II} 
  x_{n+1} = f(x_n,w_n),
\end{equation}
for example 
\[ f(x,w) = \hat{f}(x) + S_{\C{W}}(x) w, \]
where $w\in\C{W}$ is the random vector, and $S_{\C{W}}(x)\in\E{L}(\C{W},\C{X})$ is
for each $x \in \C{X}$ a bounded linear map from $\C{W}$ to $\C{X}$.

To deal with the extended state, we shall define the identification or
predicted observation operator as
\begin{equation}   \label{eq:obs-II}
 y_{n+1} = h(x_n,v_n) = H(x_{n+1},v_n) = H(f(x_n,w_n),v_n),
\end{equation}
where the noise $v_n$ --- the same as in \refeq{eq:obs-I},
our model of the inaccuracy of the observation --- has been
included.  A simple example with additive noise is
\[ h(x_n,v_n) := Y(q;U(t_{n+1},q_n,u_n,t_n,\eta)) + S_{\C{V}}(x_n)v_n,\]
where $v\in\C{V}$ is the random vector, and $S_{\C{V}}(x)\in\E{L}(\C{V},\C{X})$ is
for each $x \in \C{X}$ a bounded linear map from $\C{V}$ to $\C{X}$.
The mapping $Y:\C{Q}\times\C{U}\to\C{Y}$ is similar to the map
$\hat{Y}:\C{Q}\times\C{V}\to\C{Y}$ in the previous \refSS{data-m},
it predicts the ``true'' observation without noise $v_n$.
\refeq{eq:abs-II} for the time evolution of the extended state and
\refeq{eq:obs-II} for the observation are the basic building blocks for
the identification.

\section{Synopsis of Bayesian estimation}      \label{S:synopsis-Bayes}
There are many accounts of this, and this synopsis is just for the convenience
of the reader and to introduce notation.  Otherwise we refer to e.g.\
\citep{jaynes03, Tarantola2004, Goldstein2007, Kennedy01}, and in particular
for the r\^ole of conditional expectation (CE) to our work
\citep{BvrAkJsOpHgm11, HgmEzBvrAlOp15}.

The idea is that the observation $\hat{y}$ from \refeq{eq:obs-I}
depends on the unknown parameters $q$, which ideally should equal
$y_{n}$ from \refeq{eq:obs-II}, which in turn both directly and through the
state $x = (u(q),q)$ in \refeq{eq:abs-II} depends also on the
parameters $q$, should be equal, and any difference should give an indication
on what the ``true'' value of $q$ should be.  The problem in general
is --- apart from the distracting errors $w$ and $v$ --- that the mapping
$q \mapsto y=Y(q;u(q))$ is in general not invertible, i.e.\  $y$ does not contain
information to uniquely determine $q$, or there are many $q$ which give a good
fit for $\hat{y}$.  Therefore the \emph{inverse} problem of determining $q$
from observing $\hat{y}$ is termed an \emph{ill-posed} problem.

The situation is a bit comparable to Plato's allegory of the cave, where
Socrates compares the process of gaining knowledge with looking at the
shadows of the real things.  The observations $\hat{y}$ are the
``shadows'' of the ``real'' things $q$ and $\tilde{\upsilon}$, and from
observing the ``shadows'' $\hat{y}$ we want to infer what ``reality'' is,
in a way turning our heads towards it.  We hence
want to ``free'' ourselves from just observing the ``shadows'' and
gain some understanding of ``reality''.

One way to deal with this difficulty is to measure the difference
between observed $\hat{y}_n$ and predicted system output $y_n$ and
try to find parameters $q_n$ such that this difference is minimised.
Frequently it may happen that the parameters which realise the minimum
are not unique.  In case one wants a unique parameter, a choice has to
be made, usually by demanding additionally that some norm or similar
functional of the parameters is small as well, i.e.\ some regularity
is enforced. This optimisation approach hence leads to regularisation
procedures \citep{Tikhonov1977, Tikhonov1995, Engl1987, Engl2000}.

Here we take the view that our lack of knowledge or uncertainty
of the actual value of
the parameters can be described in a \emph{Bayesian} way through a
probabilistic model \citep{jaynes03, Tarantola2004}.  
The unknown parameter $q$ is then modelled as a random variable
(RV)---also called the \emph{prior} model---and additional information on the
system through measurement or observation
changes the probabilistic description to the so-called \emph{posterior} model.
The second approach is thus a method to update the probabilistic description 
in such a way as to take account of the additional information, and the 
updated probabilistic description \emph{is} the parameter estimate,
including a probabilistic description of the remaining uncertainty.

It is well-known that such a Bayesian update is in fact closely related
to \emph{conditional expectation}
\citep{jaynes03, Bobrowski2006/087, Goldstein2007, BvrAkJsOpHgm11, HgmEzBvrAlOp15},
and this will be the basis
of the method presented.  For these and other probabilistic notions
see for example \citep{Papoulis1998/107} and the references therein.
As the Bayesian update may be numerically
very demanding, we show computational procedures
to accelerate this update through methods based on 
\emph{functional approximation} or \emph{spectral representation}
of stochastic problems \citep{matthies6, HgmEzBvrAlOp15}.  These approximations
are in the simplest case known as Wiener's so-called \emph{homogeneous}
or \emph{polynomial chaos} expansion,
which are polynomials in independent Gaussian RVs---the ``chaos''---and which
can also be used numerically in a Galerkin procedure
\citep{matthies6, HgmEzBvrAlOp15}.

Although the Gauss-Markov theorem and its extensions \citep{Luenberger1969}
are well-known, as well as its connections to the Kalman filter
\citep{Kalman, Grewal2008}---see also the recent Monte Carlo or
\emph{ensemble} version \citep{Evensen2009}---the connection to Bayes's theorem is not
often appreciated, and is sketched here.   This turns out to be a linearised version
of \emph{conditional expectation}.

Since the parameters of the model to be estimated are uncertain, all relevant
information may be obtained via their stochastic description.
In order to extract information from the posterior, most estimates take
the form of expectations w.r.t.\ the posterior, i.e.\ a conditional
expectation (CE).  These expectations---mathematically integrals, numerically to
be evaluated by some quadrature rule---may be computed via asymptotic,
deterministic, or sampling methods by typically computing first the posterior
density.  As we will see, the posterior density does not always exist \citep{rao2005}.
Here we follow our recent publications \citep{opBvrAlHgm12, BvrAkJsOpHgm11, HgmEzBvrAlOp15}
and introduce a novel approach, namely computing the CE directly and not via
the posterior density \citep{HgmEzBvrAlOp15}.  This way all relevant information
from the conditioning may be computed directly.  In addition, we want to change the
prior, represented by a random variable (RV), into a new random variable which has
the correct posterior distribution.  We will discuss how this may be achieved, and
what approximations one may employ in the computation.

To be a bit more formal, assume that the uncertain parameters are given by
\begin{equation}  \label{eq:RVp}
x: \Omega \to  \C{X} \text{  as a RV on a probability space   }
  (\Omega, \F{A}, \D{P}) ,
\end{equation}
where the set of elementary events is $\Omega$, $\F{A}$ a $\sigma$-algebra of
measurable events, and $\D{P}$ a probability measure.  The \emph{expectation}
corresponding to $\D{P}$ will be denoted by $\EXP{}$, e.g.\ 
\[ \bar{\Psi}:=\EXP{\Psi} := \int_\Omega \Psi(x(\omega)) \, \D{P}(\di \omega), \]
for any measurable function $\Psi$ of $x$.

  Modelling our lack-of-knowledge
about $q$ in a Bayesian way \citep{jaynes03, Tarantola2004, Goldstein2007}
by replacing them with  random variables (RVs), the problem
becomes well-posed \citep{Stuart2010}.  But of course one is looking now at the
problem of finding a probability distribution that best fits the data;
and one also obtains a probability distribution, not just \emph{one} value
$q$.  Here we focus on the use of procedures similar to a linear Bayesian approach 
\citep{Goldstein2007} in the framework of ``white noise'' analysis.

As formally $q$ is a RV, so is the state $x_n$ of \refeq{eq:abs-II},
reflecting the uncertainty about the parameters and state of \refeq{eq:abstr-sys-I}.
From this follows that also the prediction of the measurement 
$y_n$ \refeq{eq:obs-II} is a RV; i.e.\ we have a \emph{probabilistic}
description of the measurement.

\subsection{The theorem of Bayes and Laplace}      \label{SS:Bayes-Laplace}
Bayes original statement of the theorem which today bears his name was
only for a very special case.  The form which we know today is due to
Laplace, and it is a statement about conditional probabilities.  A good
account of the history may be found in \citep{McGrayne2011}.

Bayes's theorem is commonly accepted as a consistent way to incorporate
new knowledge into a probabilistic description \citep{jaynes03, Tarantola2004}.
The elementary textbook statement of the theorem is about
conditional probabilities
\begin{equation}  \label{eq:iII}
 \D{P}(\C{I}_x|\C{M}_y) = \frac{\D{P}(\C{M}_y|\C{I}_x)}{\D{P}(\C{M}_y)}\D{P}(\C{I}_x),
 \quad \text{if }\D{P}(\C{M}_y)>0,
\end{equation}
where $\C{I}_x \subset \C{X}$ is some subset of possible $x$'s on which we would like
to gain some information, and $\C{M}_y\subset \C{Y}$ is the information
provided by the measurement.  The term $\D{P}(\C{I}_x)$ is the so-called
\emph{prior}, it is what we know before the observation $\C{M}_y$.
The quantity $\D{P}(\C{M}_y|\C{I}_x)$ is the so-called \emph{likelihood},
the conditional probability of $\C{M}_y$ assuming that $\C{I}_x$ is given.
The term $\D{P}(\C{M}_y)$ is the so called \emph{evidence}, the probability
of observing $\C{M}_y$ in the first place, which sometimes can be expanded
with the \emph{law of total probability}, allowing to choose between
different models of explanation.  It is necessary to make the
right hand side of \refeq{eq:iII} into a real probability---summing
to unity---and hence the term $\D{P}(\C{I}_x|\C{M}_y)$, the \emph{posterior}
reflects our knowledge on $\C{I}_x$ \emph{after} observing $\C{M}_y$.
The quotient $\D{P}(\C{M}_y|\C{I}_x)/\D{P}(\C{M}_y)$ is sometimes termed the
\emph{Bayes} factor, as it reflects the relative change in probability after
observing $\C{M}_y$.

This statement \refeq{eq:iII} runs into problems if the set observations
$\C{M}_y$ has vanishing measure, $\D{P}(\C{M}_y)=0$, as is the case when we observe
\emph{continuous} random variables, and the theorem would have to be
formulated in \emph{densities}, or more precisely in 
probability density functions (pdfs).  But the Bayes factor then has the indeterminate
form $0/0$, and some form of limiting procedure is needed.  As a sign that
this is not so simple---there are different and inequivalent forms of doing
it---one may just point to the so-called \emph{Borel-Kolmogorov} paradox.
See \citep{rao2005} for a thorough discussion.

There is one special case where something resembling \refeq{eq:iII} may be
achieved with pdfs, namely if $y$ and $x$ have a 
\emph{joint} pdf $\pi_{y,x}(y,x)$.
As $y$ is essentially a function of $x$, this is a special case
depending on conditions on the error term $v$.  In this case \refeq{eq:iII}
may be formulated as 
\begin{equation}  \label{eq:iIIa}
 \pi_{x|y}(x|y) = \frac{\pi_{y,x}(y,x)}{Z_s(y)},
\end{equation}
where $\pi_{x|y}(x|y)$ is the \emph{conditional} pdf, and the ``evidence'' $Z_s$
(from German \emph{Zustandssumme} (sum of states), a term used in physics)
is a normalising factor such that the conditional pdf $\pi_{x|y}(\cdot|y)$
integrates to unity
\[ Z_s(y) = \int_\Omega  \pi_{y,x}(y,x) \, \di x . \]
The joint pdf may be split into the \emph{likelihood density} $\pi_{y|x}(y|x)$
and the \emph{prior} pdf $\pi_x(x)$
\[ \pi_{y,x}(y,x) =  \pi_{y|x}(y|x) \pi_x(x) , \]
so that \refeq{eq:iIIa} has its familiar form (\citep{Tarantola2004} Ch.\ 1.5)
\begin{equation}  \label{eq:iIIa1}
 \pi_{x|y}(x|y) = \frac{\pi_{y|x}(y|x)}{Z_s(y)}  \pi_x(x) .
\end{equation}
These terms are in direct correspondence with
those in \refeq{eq:iII} and carry the same names.
Once one has the conditional measure $\D{P}(\cdot|\C{M}_y)$
or even a conditional pdf $\pi_{x|y}(\cdot|y)$,
the \emph{conditional expectation} (CE) $\EXP{\cdot|\C{M}_y}$ may be
defined as an integral over that
conditional measure resp.\ the conditional pdf.  Thus classically,
the conditional measure or pdf implies the conditional expectation:
\[ \EXP{\Psi|\C{M}_y} := \int_{\C{X}} \Psi(x) \, \D{P}(\di x|\C{M}_y) \]
for any measurable function $\Psi$ of $x$.

Please observe that
the model for the RV representing the error $v(\omega)$ determines
the likelihood functions $\D{P}(\C{M}_y|\C{I}_x)$ resp.\ the existence and form
of the likelihood density $\pi_{y|x}(\cdot|x)$.  
In computations, it is here that the computational
model \refeq{eq:abstr-sys-II} and \refeq{eq:obs-II} is needed,
to predict the measurement RV $y$ given
the state and parameters $x$ as a RV.

Most computational approaches determine the pdfs, but we will later argue that
it may be advantageous to work directly with RVs, and not with conditional
probabilities or pdfs.  To this end, the concept of conditional expectation (CE)
and its relation to Bayes's theorem is needed \citep{Bobrowski2006/087}.

\subsection{Conditional expectation}      \label{SS:condex}
To avoid the difficulties with conditional probabilities like in the
Borel-Kolmogorov paradox alluded to in the previous \refSS{Bayes-Laplace}, 
\emph{Kolmogorov} himself---when he was setting up
the axioms of the mathematical theory probability---turned the relation between
conditional probability or pdf and conditional expectation around, and
defined as a first and fundamental notion \emph{conditional expectation}
 \citep{Bobrowski2006/087, rao2005}.

It has to be defined not with respect to measure-zero observations of a RV $y$,
but w.r.t\ sub-$\sigma$-algebras $\F{B}\subset\F{A}$ of the underlying
$\sigma$-algebra $\F{A}$.  The $\sigma$-algebra may be loosely seen as the
collection of subsets of $\Omega$ on which we can make statements about their
probability, and for fundamental mathematical reasons in many cases this is
\emph{not} the set of \emph{all} subsets of $\Omega$.  The sub-$\sigma$-algebra $\F{B}$
may be seen as the sets on which we learn something through the observation.

The simplest---although slightly restricted---way to define the conditional
expectation \citep{Bobrowski2006/087} is to just consider RVs with 
\emph{finite variance}, i.e.\ the Hilbert-space 
\[ \C{S} := \mrm{L}_2(\Omega,\F{A},\D{P}) := \{r:\Omega\to\D{R}\;:\;
  r \;\text{measurable w.r.t.}\;\F{A}, \EXP{|r|^2}<\infty \}.\]
If $\F{B}\subset\F{A}$ is a sub-$\sigma$-algebra, the space
\[ \C{S}_\F{B} := \mrm{L}_2(\Omega,\F{B},\D{P}) := \{r:\Omega\to\D{R}\;:\;  
  r \;\text{measurable w.r.t.}\;\F{B}, \EXP{|r|^2}<\infty \} \subset \C{S}\]
is a \emph{closed} subspace, and hence has a well-defined continuous orthogonal
projection $P_\F{B}: \C{S}\to\C{S}_\F{B}$.  The \emph{conditional expectation} (CE)
of a RV $r\in\C{S}$
w.r.t.\ a sub-$\sigma$-algebra $\F{B}$ is then defined as that orthogonal projection
\begin{equation} \label{eq:def-ce}
\EXP{r|\F{B}} :=   P_\F{B}(r) \in \C{S}_\F{B}.
\end{equation}
It can be shown \citep{Bobrowski2006/087} to coincide with the classical notion
when that one is defined, and the \emph{unconditional} expectation $\EXP{}$
is in this view just the CE w.r.t.\ the minimal $\sigma$-algebra 
$\F{B}=\{\emptyset, \Omega\}$.
As the CE is an orthogonal projection, it minimises the squared error
\begin{equation} \label{eq:min-ce}
 \EXP{|r - \EXP{r|\F{B}}|^2} = \min\{ \EXP{|r - \tilde{r}|^2}\;:\; 
   \tilde{r}\in\C{S}_\F{B} \},
\end{equation}
from which one obtains the \emph{variational equation} or orthogonality relation
\begin{equation} \label{eq:var-ce}
\forall \tilde{r}\in\C{S}_\F{B}: \quad \EXP{\tilde{r} (r - \EXP{r|\F{B}})} =0 ;
\end{equation}
and one has a form of \emph{Pythagoras's} theorem 
\[ \EXP{|r|^2} = \EXP{|r - \EXP{r|\F{B}}|^2}  +  \EXP{|\EXP{r|\F{B}}|^2}. \]
The CE is therefore a form of a \emph{minimum mean square error} (MMSE) estimator.

Given the CE, one may completely characterise the \emph{conditional} probability,
e.g.\ for $A \subset \Omega, A \in \F{B}$ by 
\[ \D{P}(A | \F{B}) := \EXP{\chi_A|\F{B}} , \]
where $\chi_A$ is the RV which is unity iff $\omega\in A$ and vanishes otherwise
--- the \emph{usual} characteristic function, sometimes also termed an indicator
function.  Thus if we know $\D{P}(A | \F{B})$ for each $A \in \F{B}$, we know
the conditional probability.
Hence having the CE $\EXP{\cdot|\F{B}}$ allows one to know everything about
the conditional probability; the conditional or posterior density is not needed.
If the prior probability was the distribution of some
RV $r$, we know that it is completely characterised by the \emph{prior} characteristic
function --- in the sense of probability theory --- $\vphi_r(s) := \EXP{\exp(\ii r s)}$.
To get the \emph{conditional} characteristic function $\vphi_{r|\F{B}}(s) =
\EXP{\exp(\ii r s)|\F{B}}$, all one has to do is use the CE instead of the unconditional
expectation.  This then completely characterises the conditional distribution.

In our case of an observation of a RV $y$,
the sub-$\sigma$-algebra $\F{B}$ will be the one generated by
the \emph{observation} $y=h(x,v)$, i.e.\ $\F{B}=\sigma(y)$, these are those
subsets of $\Omega$ on which we may obtain \emph{information} from the
observation.  According to the \emph{Doob-Dynkin} lemma the subspace
$\C{S}_{\sigma(y)}$ is given by
\begin{equation} \label{eq:DDL}
\C{S}_{\sigma(y)} :=  \{r \in \C{S} \;:\; r(\omega) = \phi(y(\omega)),  
  \phi \;\text{measurable} \} \subset \C{S} ,
\end{equation}
i.e.\ functions of the observation.  This means intuitively that anything we
learn from an observation is a function of the observation, and the subspace
$\C{S}_{\sigma(y)} \subset \C{S} $ is where the information from the measurement
is lying.

Observe that the CE $\EXP{r|\sigma(y)}$ and conditional probability
$\D{P}(A|\sigma(y))$---which we will abbreviate to
$\EXP{r|y}$, and similarly $\D{P}(A | \sigma(y))=\D{P}(A|y)$---is a RV,
as $y$ is a RV.  Once an observation has been made,
i.e.\ we observe for the RV $y$ the fixed value $\hat{y}\in\C{Y}$,
then---for almost all $\hat{y}\in\C{Y}$---
$\EXP{r|\hat{y}} \in \D{R}$ is just a number---the \emph{posterior expectation},
and $\D{P}(A|\hat{y})=\EXP{\chi_A|\hat{y}}$ is the \emph{posterior probability}.
Often these are also termed conditional expectation and conditional probability,
which leads to confusion.  We therefore prefer the attribute \emph{posterior}
when the actual observation $\hat{y}$ has been observed and inserted in the expressions.
Additionally, from \refeq{eq:DDL} one knows that for
some function $\phi_r$ --- for each RV $r$ it is a possibly different function ---
one has that
\begin{equation}  \label{eq:CE-DDL}
  \EXP{r|y} = \phi_r(y) \quad \text{ and } \quad \EXP{r|\hat{y}} = \phi_r(\hat{y})
\end{equation}

In relation to Bayes's theorem, one may conclude that if it is possible to
compute the CE w.r.t.\ an observation $y$ or rather the posterior expectation,
then the conditional and especially the posterior probabilities after the
observation $\hat{y}$ may as well be computed, regardless whether joint
pdfs exist or not.  We take this as the starting point to Bayesian estimation.

The conditional expectation has been formulated for scalar RVs, but it is clear
that the notion carries through to vector-valued RVs in a straightforward manner,
formally by seeing a---let us say---$\C{Y}$-valued RV as an element of the
tensor Hilbert space $\E{Y}=\C{Y}\otimes\C{S}$ \citep{Hackbusch_tensor}, as
\[ \E{Y}=\C{Y}\otimes\C{S} \cong \mrm{L}_2(\Omega,\F{A},\D{P};\C{Y}), \]
the RVs in $\C{Y}$ with finite \emph{total} variance
\[ \nt{\tilde{y}}_{\E{Y}}^2 = \int_\Omega \nd{\tilde{y}(\omega)}_{\C{Y}}^2
        \,\D{P}(\di \omega) < \infty . \]
Here $\nd{\tilde{y}(\omega)}_{\C{Y}}^2 = \ip{\tilde{y}(\omega)}{\tilde{y}(\omega)}_{\C{Y}}$
is the norm squared on the deterministic component $\C{Y}$ with inner product
$\ip{\cdot}{\cdot}_{\C{Y}}$; and the total $\mrm{L}_2$-norm
of an elementary tensor $y\otimes r\in \C{Y}\otimes\C{S}$ with $y\in\C{Y}$
and $r\in\C{S}$ can also be written as
\[  \nt{y\otimes r}_{\E{Y}}^2 = \ipd{y\otimes r}{y\otimes r}_{\E{Y}}= 
    \nd{y}_{\C{Y}}^2 \nd{r}_{\C{S}}^2 = \ip{y}{y}_{\C{Y}} \ip{r}{r}_{\C{S}}, \]
where $\ip{r}{r}_{\C{S}} = \nd{r}_{\C{S}}^2 := \EXP{|r|^2}$ is the usual inner product
of scalar RVs.

The CE on $\E{Y}$ is then formally
given by $\D{E}_{\E{Y}}(\cdot|\F{B}):=I_{\C{Y}}\otimes\EXP{\cdot|\F{B}}$, where
$I_{\C{Y}}$ is the identity operator on $\C{Y}$.  This means that for an elementary
tensor $y\otimes r \in \C{Y}\otimes\C{S}$ one has
\[ \D{E}_{\E{Y}}(y\otimes r|\F{B}) = y \otimes \EXP{r|\F{B}}. \]
The vector valued conditional expectation 
\[ \D{E}_{\E{Y}}(\cdot|\F{B}) = I_{\C{Y}}\otimes\EXP{\cdot|\F{B}}: 
    \E{Y} = \C{Y}\otimes\C{S} \to \C{Y}\] 
is also an orthogonal projection, but
in $\E{Y}$, for simplicity also denoted by $\EXP{\cdot|\F{B}} = P_{\F{B}}$ when there
is no possibility of confusion.

\section{Constructing a posterior random variable}      \label{S:construct-post}
We recall the equations governing our model \refeq{eq:abs-II} and \refeq{eq:obs-II}, 
and interpret them now as equations acting on RVs, i.e.\ for $\omega \in \Omega$:
\begin{align}  \label{eq:abs-RV}
   \hat{x}_{n+1}(\omega) &= f(x_n(\omega),w_n(\omega)), \\
   \label{eq:obs-RV}
   y_{n+1}(\omega) &= h(x_n(\omega),v_n(\omega)),
\end{align}
where one may now see the mappings $f:\E{X}\times\E{W}\to\E{X}$ and
$h:\E{X}\times\E{V}\to\E{Y}$ acting on the tensor Hilbert spaces of RVs with
finite variance, e.g.\ $\E{Y} := \C{Y}\otimes\C{S}$ with the inner product
as explained in \refSS{condex}; and similarly for $\E{X} := \C{X}\otimes\C{S}$
resp.\ $\E{W}$ and $\E{V}$.

\subsection{Updating random variables}      \label{SS:updating-RV}
We now focus on the step from time $t_n$ to $t_{n+1}$.  Knowing the RV $x_n \in\E{X}$,
one predicts the new state $\hat{x}_{n+1}\in\E{X}$ and the measurement $y_{n+1}\in\E{Y}$.
With the CE operator from the measurement prediction $y_{n+1}$ in \refeq{eq:obs-RV}
\begin{equation}  \label{eq:CE-Psi}
   \EXP{\Psi(x_{n+1})|\sigma(y_{n+1})} = \phi_\Psi(y_{n+1}) ,
\end{equation}   
and the actual observation $\hat{y}_{n+1}$ one may then compute the \emph{posterior}
expectation operator
\begin{equation}  \label{eq:Post-Psi}
   \EXP{\Psi(x_{n+1})|\hat{y}_{n+1}} = \phi_\Psi(\hat{y}_{n+1}).
\end{equation}
This has all the information about the posterior probability.

But to then go on from $t_{n+1}$ to $t_{n+2}$ with the \refeq{eq:abs-RV} and \refeq{eq:obs-RV},
one needs a new RV $x_{n+2}$ which
has the posterior distribution described by the mappings $\phi_\Psi(\hat{y}_{n+1})$ in
\refeq{eq:Post-Psi}.  Bayes's theorem only specifies this probabilistic content.
There are many RVs which have this posterior distribution, and we have to pick
a particular representative to continue the computation.
We will show a method which in the simplest case comes back to MMSE.

Here it is proposed to construct this new RV $x_{n+1}$ from
the predicted $\hat{x}_{n+1}$ in \refeq{eq:abs-RV} with a mapping, starting
from very simple ones and getting ever more complex.
For the sake of brevity of notation, the forecast RV will be called $x_f = \hat{x}_{n+1}$,
and the forecast measurement $y_f = y_{n+1}$, and we will denote the measurement just
by $\hat{y}=\hat{y}_{n+1}$.  The RV $x_{n+1}$ we want to construct will
be called the \emph{assimilated} RV $x_a = x_{n+1}$ --- it has assimilated the new
observation $\hat{y}=\hat{y}_{n+1}$.
Hence what we want is a new RV which is an \emph{update} of the forecast RV $x_f$
\begin{equation}   \label{eq:update-all}
   x_a = B(x_f,y_f,\hat{y}) = x_f + \Xi(x_f,y_f,\hat{y}),
\end{equation}
with a Bayesian update map $B$ resp.\ a change given by the \emph{innovation} map $\Xi$.
Such a transformation is often called a \emph{filter} --- the measurement $\hat{y}$
is filtered to produce the update.

\subsection{Correcting the mean}      \label{SS:correct-mean}
We take first the task to give the new RV the correct posterior \emph{mean}
$\bar{x}_a = \EXP{x_a|\hat{y}}$, i.e.\ we take $\Psi(x)=x$ in
\refeq{eq:Post-Psi}.  Remember that according to \refeq{eq:def-ce}
$\EXP{x_a|\sigma(y_f)} = \phi_{x_f}(y_f) =: \phi_x(y_f)$
is an orthogonal projection $P_{\sigma(y_f)}(x_f)$
from $\E{X} = \C{X}\otimes \C{S}$ onto $\E{X}_\infty := \C{X}\otimes \C{S}_\infty$, where
$\C{S}_\infty := \C{S}_{\sigma(y)}=\mrm{L}_2(\Omega,\sigma(y_f),\D{P})$.
Hence there is an orthogonal decomposition
\begin{align}  \label{eq:ortho-space}
   \E{X} &= \C{X}\otimes \C{S} = \E{X}_\infty \oplus \E{X}_\infty^\perp
          = (\C{X}\otimes \C{S}_\infty) \oplus (\C{X}\otimes \C{S}_\infty^\perp), \\
    \label{eq:ortho-RV}
    x_f &=   P_{\sigma(y_f)}(x_f) + (I_{\E{X}} - P_{\sigma(y_f)})(x_f) =
           \phi_x(y_f) + (x_f - \phi_x(y_f)).
\end{align}
As $ P_{\sigma(y_f)} = \EXP{\cdot|\sigma(y_f)}$ is a projection, one sees from
\refeq{eq:ortho-RV} that the second term has vanishing CE for any measurement $\hat{y}$:
\begin{equation}  \label{eq:corr-mean}
\EXP{x_f - \phi_x(y_f)|\sigma(y_f)} = P_{\sigma(y_f)}(I_{\E{X}} - P_{\sigma(y_f)})(x_f) =0.
\end{equation}
One may view this also in the following way:  From the measurement $y_a$ resp.\  $\hat{y}$
we only learn something about the subspace $\E{X}_\infty$.  Hence when the measurement comes,
we change the decomposition \refeq{eq:ortho-RV} by only fixing the component $\phi_x(y_f)
\in \E{X}_\infty$, and leaving the orthogonal rest unchanged:
\begin{equation}   \label{eq:update-xa1}
  x_{a,1} = \phi_x(\hat{y}) + (x_f - \phi_x(y_f)) = x_f + (\phi_x(\hat{y}) - \phi_x(y_f)).
\end{equation}
Observe that this is just a linear \emph{translation} of the RV $x_f$, i.e.\ a very simple
map $B$ in \refeq{eq:update-all}.
From \refeq{eq:corr-mean} follows that
\[  \bar{x}_{a,1} = \EXP{x_{a,1}|\hat{y}} = \phi_x(\hat{y}) = \EXP{x_{a}|\hat{y}}, \]
hence the RV $x_{a,1}$ from \refeq{eq:update-xa1} has the \emph{correct} posterior mean.

Observe that according to \refeq{eq:corr-mean} the term $x_\perp :=(x_f - \phi_x(y_f))$ in
\refeq{eq:update-xa1} is a zero mean RV, hence the covariance and total variance of $x_{a,1}$
is given by
\begin{align}  \label{eq:cov-xa1}
   \cov(x_{a,1}) &= \EXP{x_\perp \otimes x_\perp} = \EXP{x_\perp^{\otimes 2}} =: C_1, \\
    \label{eq:toc-xa1}
    \var(x_{a,1}) &=  \EXP{\nd{x_\perp(\omega)}_{\C{X}}^2} = \tr(\cov(x_{a,1})).
\end{align}

\subsection{Correcting higher moments}      \label{SS:correct-higher}
Here let us just describe two small additional steps:  
we take $\Psi(x)=\nd{x-\phi_x(\hat{y})}_{\C{X}}^2$ in \refeq{eq:Post-Psi}, and hence obtain
the total posterior variance as
\begin{equation}  \label{eq:var-xa}
   \var(x_{a}) =  \EXP{\nd{x_f - \phi_x(y_f)}_{\C{X}}^2|\hat{y}} = \phi_{x-\bar{x}}(\hat{y}).
\end{equation}
Now it is easy to correct \refeq{eq:update-xa1} to obtain
\begin{equation}   \label{eq:update-xat}
  x_{a,t} = \phi_x(\hat{y}) + \sqrt{\frac{\var(x_{a})}{\var(x_{a,1})}}(x_f - \phi_x(y_f)),
\end{equation}
a RV which has the correct posterior mean \emph{and} the correct posterior total variance
\[ \var(x_{a,t}) = \var(x_{a}) . \]
Observe that this is just a linear translation and partial scaling of the RV $x_f$,
i.e.\ still a very simple map $B$ in \refeq{eq:update-all}.

With more computational effort, one may choose $\Psi(x)=(x-\phi_x(\hat{y}))^{\otimes 2}$ 
in \refeq{eq:Post-Psi}, to obtain the covariance of $x_a$:
\begin{equation}  \label{eq:cov-xa}
   \cov(x_{a}) =  \EXP{(x-\phi_x(\hat{y}))^{\otimes 2}|\hat{y}} = \phi_{\otimes 2}(\hat{y})  
      =: C_a .
\end{equation}
Instead of just scaling the RV as in \refeq{eq:update-xat}, one may now take
\begin{equation}   \label{eq:update-xa2}
  x_{a,2} = \phi_x(\hat{y}) + B_a {B_1}^{-1}(x_f - \phi_x(y_f)),
\end{equation}
where $B_1$ is any operator ``square root'' that satisfies $B_{1} {B_{1}}^{*} = C_1$ in
\refeq{eq:cov-xa1}, and similarly $B_{a}  {B_{a}}^{*} = C_a$ in \refeq{eq:cov-xa}.
One possibility is the real square root --- as $C_1$ and $C_a$ are positive definite ---
$B_1 = C_1^{1/2}$, but computationally a Cholesky factor is usually cheaper.
In any case, no matter which ``square root'' is chosen, the RV $x_{a,2}$ in
\refeq{eq:update-xa2} has the correct posterior mean \emph{and} the correct
posterior covariance.
Observe that this is just an affine transformation of the RV $x_f$, i.e.\ still
a fairly simple map $B$ in \refeq{eq:update-all}.

By combining further transport maps \citep{moselhyYMarz:2011} it seems
possible to construct a RV $x_a$ which has the desired posterior distribution
to any accuracy.  This is beyond the scope of the present paper, and is
ongoing work on how to do it in the simplest way.  For the following, we shall be
content with the update \refeq{eq:update-xa1} in \refSS{correct-mean}.

\section{The Gauss-Markov-Kalman filter (GMKF)}      \label{S:GMKF}
It turned out that practical computations in the context of Bayesian estimation
can be extremely demanding, see \citep{McGrayne2011} for an account of the
history of Bayesian theory, and the break-throughs required in computational
procedures to make Bayesian estimation possible at all for practical purposes.
This involves both the Monte Carlo (MC) method and the Markov chain Monte Carlo
(MCMC) sampling procedure.  One may have gleaned this also already from \refS{construct-post}.

To arrive at computationally feasible procedures for computationally
demanding models \refeq{eq:abs-RV} and \refeq{eq:obs-RV}, where MCMC methods are not feasible,
approximations are necessary.  This means in some way not using all information
but having a simpler computation.  Incidentally, this connects with the
Gauss-Markov theorem \citep{Luenberger1969} and the Kalman filter (KF)
\citep{Kalman, Grewal2008}.  These were initially stated and developed without
any reference to Bayes's theorem.  The Monte Carlo (MC) computational
implementation of this is the \emph{ensemble} KF (EnKF) \citep{Evensen2009}.
We will in contrast use a white noise or polynomial chaos approximation
\citep{opBvrAlHgm12, BvrAkJsOpHgm11, HgmEzBvrAlOp15}.  But the initial ideas leading to
the abstract Gauss-Markov-Kalman filter (GMKF) are independent of any
computational implementation and are presented first.  It is in an abstract
way just \emph{orthogonal projection}, based on the update \refeq{eq:update-xa1}
in \refSS{correct-mean}.

\subsection{Building the filter}     \label{SS:building-filter}
Recalling \refeq{eq:abs-RV} and \refeq{eq:obs-RV} together with \refeq{eq:update-xa1},
the algorithm for forecasting and assimilating with just the posterior mean
looks like
\begin{align*}   
   \hat{x}_{n+1}(\omega) &= f(x_n(\omega),w_n(\omega)), \\
   y_{n+1}(\omega) &= H(f(x_n(\omega),w_n(\omega)),v_n(\omega)), \\
  x_{n+1}(\omega) &=  \hat{x}_{n+1}(\omega) + (\phi_x(\hat{y}_{n+1}) - \phi_x(y_{n+1}(\omega))).
\end{align*}
For simplicity of notation the argument $\omega$ will be suppressed.  Also it will
turn out that the mapping $\phi_x$ representing the CE can in most cases only be
computed approximately, so we want to look at update algorithms with a general map
$g:\C{Y} \to \C{X}$ to possibly approximate $\phi_x$:
\begin{multline}      \label{eq:update-xa1-filt}
  x_{n+1} =  f(x_n,w_n) + (g(\hat{y}_{n+1}) - g(H(f(x_n,w_n),v_n))) \\
          =  f(x_n,w_n) - g(H(f(x_n,w_n),v_n)) + g(\hat{y}_{n+1}) ,
\end{multline}
where  the first two equations have been inserted into the last.  This is the
filter equation for tracking and identifying the extended state of \refeq{eq:abs-RV}.
One may observe that the normal evolution model \refeq{eq:abs-RV} is corrected
by the innovation term.  This is the \emph{best unbiased} filter,
with $\phi(\hat{y})$ a MMSE estimate.
It is clear that the \emph{stability} of the solution to \refeq{eq:update-xa1-filt}
will depend on the contraction properties or otherwise of the map
$f - g \circ H \circ f = (I-g \circ H) \circ f$ as applied to $x_n$, but that is
not completely worked out yet and beyond the scope of this paper. 

By combining the minimisation property~\refeq{eq:min-ce} and the Doob-Dynkin 
lemma~\refeq{eq:DDL}, we see that the map $\phi_\Psi$ is defined by
\begin{equation} \label{eq:min-CE}
  \nd{\Psi(x) - \phi_\Psi(y)}^2_{\E{X}} = \min_{\vpi} \nd{\Psi(x) - \vpi(y)}^2_{\E{X}}
  = \min_{z\in\E{X}_{\infty}} \nd{\Psi(x) - z}^2_{\E{X}},
\end{equation}
where $\vpi$ ranges over all measurable maps $\vpi:\C{Y}\to\C{X}$.
As $\E{X}_{\sigma(y)}=\E{X}_{\infty}$ is $\C{L}$-closed \citep{Bosq2000, HgmEzBvrAlOp15},
it is characterised similarly to
\refeq{eq:var-ce}, but by orthogonality in the $\C{L}$-invariant sense
\begin{equation} \label{eq:var-CE}
  \forall z\in\E{X}_{\infty}:\quad \EXP{z \otimes (\Psi(x) - \phi_\Psi(y))} = 0,
\end{equation}
i.e.\ the RV $(\Psi(x) - \vpi(y))$ is orthogonal in the $\C{L}$-invariant sense
to all RVs $z\in\E{X}_{\infty}$, which means its correlation operator vanishes.
Although the CE
$\EXP{x|y} = P_{\sigma(y)}(x)$ is an orthogonal projection,
as the measurement operator $Y$, resp.\ $h$ or $H$, which evaluates $y$, is not
necessarily linear in $x$,  hence the optimal map $\phi_x(y)$ is also not
necessarily linear in $y$.  In some sense it has to be the opposite of $Y$.

\subsection{The linear filter}     \label{SS:linear-filter}
The minimisation in \refeq{eq:min-CE} over all measurable maps is still a
formidable task, and typically only feasible in an approximate way.  
One problem of course is, that the space $\E{X}_{\infty}$ is in general
infinite-dimensional.  The standard Galerkin approach is then to approximate
it by finite-dimensional subspaces, see \citep{HgmEzBvrAlOp15} for a
general description and analysis of the Galerkin convergence.

Here we replace  $\E{X}_{\infty}$ by much smaller subspace; and we choose
in some way the simplest possible one
\begin{equation} \label{eq:X-1}
\E{X}_1 = \{ z \;:\; z = \Phi(y) = L (y(\omega)) + b, \;
 L \in \E{L}(\C{Y},\C{X}),\; b \in \C{X}  \} \subset \E{X}_{\infty}
 \subset \E{X} ,
\end{equation}
where the $\Phi$ are just \emph{affine} maps; they are certainly measurable.  Note that
$\E{X}_1$ is also an $\C{L}$-invariant subspace of $\E{X}_{\infty}\subset\E{X}$.

Note that also other, possibly larger, $\C{L}$-invariant subspaces of $\E{X}_{\infty}$
can be used, but this seems to be smallest useful one.
Now the minimisation \refeq{eq:min-CE} may be replaced by 
\begin{equation} \label{eq:min-CE-lin}
  \nd{x - (K(y)+a)}^2_{\E{X}} =   \min_{L, b} \nd{x - (L(y)+b)}^2_{\E{X}} ,
\end{equation}
and the optimal affine map is defined by $K\in \E{L}(\C{Y},\C{X})$ and
$a \in \C{X}$.

Using this $g(y) = K(y) + a$, one disregards some
information as $\E{X}_1 \subset \E{X}_{\infty}$ is usually a true subspace --- 
observe that the subspace represents the information we may learn from
the measurement --- but the computation is easier, and one arrives in lieu of
\refeq{eq:update-xa1} at
\begin{equation} \label{eq:update-xa1L}
  x_{a,1L} =  x_f + ( K(\hat{y}) - K(y) )=    x_f + K(\hat{y} - y).
\end{equation}
This is the \emph{best linear} filter, with the linear MMSE $K(\hat{y})$.
One may note that the constant term
$a$ in \refeq{eq:min-CE-lin} drops out in the filter equation.

The algorithm corresponding to \refeq{eq:update-xa1-filt} is then
\begin{multline}      \label{eq:update-xa1L-filt}
  x_{n+1} =  f(x_n,w_n) + K((\hat{y}_{n+1}) - H(f(x_n,w_n),v_n))  \\
          =  f(x_n,w_n) - K(H(f(x_n,w_n),v_n)) + K(\hat{y}_{n+1}) .
\end{multline}

\subsection{The Gauss-Markov theorem and the Kalman filter}      \label{SS:GM-and-KF}
The optimisation described in \refeq{eq:min-CE-lin} is a familiar one, it is
easily solved, and the solution is given by an extension of the \emph{Gauss-Markov}
theorem \citep{Luenberger1969}.  The same idea of a linear MMSE is behind the
\emph{Kalman} filter 
\citep{Kalman, Grewal2008, Goldstein2007, Papoulis1998/107, Evensen2009}.
In our context it reads
\begin{thm}
The solution to \refeq{eq:min-CE-lin}, minimising
\[ \nd{x - (K(y)+a)}^2_{\E{X}} =  
  \min_{L, b} \nd{x - (L(y)+b)}^2_{\E{X}} \]
is given by $K := \cov(x,y) \cov(y)^{-1}$ and
$a := \bar{x} - K(\bar{y})$, where $\cov(x,y)$
is the covariance of $x$ and $y$, and $\cov(y)$ is the
auto-covariance of $y$.    In case $\cov(y)$ is \emph{singular}
or nearly singular, the \emph{pseudo-inverse} can be taken
instead of the inverse.
\end{thm}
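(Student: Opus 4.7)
The plan is to exploit that minimization of $\nd{x-(L(y)+b)}^2_{\E{X}}$ over the affine maps is projection onto the $\C{L}$-invariant closed subspace $\E{X}_1$ introduced in \refeq{eq:X-1}, so the optimum is characterized by the orthogonality condition analogous to \refeq{eq:var-CE}. Equivalently, I can argue by first-order stationarity of the convex quadratic functional
\[
 J(L,b) := \EXP{\nd{x-L(y)-b}_{\C{X}}^2}.
\]
The strategy is to minimise out $b$ first and then $L$, recognising $\cov(x,y)$ and $\cov(y)$ along the way.

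First I would hold $L$ fixed and differentiate $J$ with respect to $b\in\C{X}$ (or simply use the orthogonality of the residual to constants, which belong to $\E{X}_1$). This yields $\EXP{x-L(y)-b}=0$, hence $b=\bar{x}-L(\bar{y})$. Substituting back gives the reduced functional
\[
 \tilde{J}(L)=\EXP{\nd{(x-\bar{x})-L(y-\bar{y})}^2_{\C{X}}},
\]
so after centring one may assume without loss of generality $\bar{x}=0$, $\bar{y}=0$. Next I would impose the remaining orthogonality: for every $M\in\E{L}(\C{Y},\C{X})$ the residual must be orthogonal to $M(y)$ in the $\C{L}$-invariant sense, i.e.\
\[
 \EXP{((x-\bar{x})-L(y-\bar{y})) \otimes (y-\bar{y})} = 0,
\]
which rearranges to the normal equation $L\,\cov(y)=\cov(x,y)$. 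Solving gives $K=\cov(x,y)\cov(y)^{-1}$, and resubstituting gives $a=\bar{x}-K(\bar{y})$.

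The main subtlety is the singular case, which is precisely why the statement mentions a pseudo-inverse. Here $\cov(y)$ is only positive semi-definite on $\C{Y}$, and $L\cov(y)=\cov(x,y)$ may admit many solutions (or seem to admit none if one is careless). The key observation to handle this is that $\mrm{ran}(\cov(x,y)^{*})\subset\mrm{ran}(\cov(y))$: indeed, for any $\xi\in\ker\cov(y)$ one has $\EXP{\ip{\xi}{y-\bar{y}}_{\C{Y}}^2}=0$, so $\ip{\xi}{y-\bar{y}}_{\C{Y}}=0$ almost surely, which forces $\cov(x,y)\xi=\EXP{(x-\bar{x})\ip{\xi}{y-\bar{y}}_{\C{Y}}}=0$. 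Therefore $\cov(x,y)$ vanishes on $\ker\cov(y)$, so the Moore-Penrose pseudo-inverse $\cov(y)^{+}$ produces a bona fide solution $K=\cov(x,y)\cov(y)^{+}$. Any two solutions differ only on $\ker\cov(y)$, and $L(y-\bar{y})$ is unchanged there (since $y-\bar{y}$ lies a.s.\ in $\ker\cov(y)^{\perp}$), so the attained minimum and the resulting filter \refeq{eq:update-xa1L} are independent of the chosen generalised inverse. This settles both existence and uniqueness up to irrelevant modifications, completing the proof.
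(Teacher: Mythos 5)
Your proof is correct. Note that the paper itself does not supply an argument for this theorem at all: it simply remarks that the optimisation \refeq{eq:min-CE-lin} ``is a familiar one, easily solved,'' and delegates to the Gauss--Markov theorem as in Luenberger. Your derivation is exactly the standard one that is being invoked there --- eliminate $b$ by stationarity (equivalently, orthogonality of the residual to the constants, which lie in $\E{X}_1$), centre the variables, and read off the normal equation $L\,\cov(y)=\cov(x,y)$ from orthogonality of the residual to all purely linear elements of $\E{X}_1$; this is precisely the projection characterisation \refeq{eq:var-CE} restricted to the subspace \refeq{eq:X-1}. What you add beyond the paper is a genuine justification of the final sentence of the theorem: your observation that $\cov(x,y)$ annihilates $\ker\cov(y)$ (because $\ip{\xi}{y-\bar{y}}_{\C{Y}}=0$ almost surely for $\xi\in\ker\cov(y)$) is the right reason why the normal equation remains solvable and why the pseudo-inverse yields a minimiser whose filter \refeq{eq:update-xa1L} is independent of the choice of generalised inverse --- the paper merely asserts this. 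The only caveat, immaterial at the level of rigour the paper itself adopts, is that in an infinite-dimensional $\C{Y}$ the range of $\cov(y)$ need not be closed, so the inclusion of $\mrm{ran}(\cov(x,y)^{*})$ in $\mrm{ran}(\cov(y))$ (rather than in its closure) and the existence of an attained minimum over bounded $L$ require an extra compactness or finite-rank assumption; in the finite-dimensional setting in which the filter is actually implemented your argument is complete.
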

The operator $K\in\E{L}(\C{Y},\C{X})$ is also called the \emph{Kalman} gain, and has
the familiar form known from least squares projections.  It is interesting
to note that initially the connection between MMSE and Bayesian estimation
was not seen \citep{McGrayne2011}, although it is one of the simplest approximations.

The resulting filter \refeq{eq:update-xa1L}
is therefore called the \tbf{Gauss-Markov-Kalman} filter (GMKF).
The original Kalman filter has \refeq{eq:update-xa1L} just for the means 
\[ \bar{x}_{a,1L} = \bar{x}_f + K(\hat{y} - \bar{y}) .\]
It easy to compute that
\begin{thm}
The covariance operator corresponding to \refeq{eq:cov-xa1}
$\mrm{cov}(x_{a,1L})$ of $x_{a,1L}$ is given by
\[ \mrm{cov}(x_{a,1L}) = \mrm{cov}(x_f) - K \mrm{cov}(x_f,y)^T
    = \mrm{cov}(x_f)-\mrm{cov}(x_f,y) \mrm{cov}(y)^{-1} \mrm{cov}(x_f,y)^T , \]
which is Kalman's formula for the covariance.
\end{thm}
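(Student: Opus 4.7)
The plan is to compute $\mrm{cov}(x_{a,1L})$ directly from the update formula \refeq{eq:update-xa1L}, namely $x_{a,1L} = x_f + K(\hat{y} - y)$, by exploiting the bilinearity of covariance and the explicit form of $K$ supplied by the preceding theorem. The first key observation is that the actual observation $\hat{y}\in\C{Y}$ is a deterministic element, so $K\hat{y}\in\C{X}$ is also deterministic and contributes nothing to the covariance. Consequently $\mrm{cov}(x_{a,1L}) = \mrm{cov}(x_f - Ky)$, reducing the problem to the covariance of a linear combination of $x_f$ and $y$.

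Next I would expand using bilinearity of covariance together with the standard rules $\mrm{cov}(Az,w) = A\,\mrm{cov}(z,w)$ and $\mrm{cov}(z,Aw) = \mrm{cov}(z,w)\,A^T$ for a bounded linear operator $A$ (here $T$ denotes the adjoint in the Hilbert-space sense). This gives
\begin{equation*}
\mrm{cov}(x_f - Ky) = \mrm{cov}(x_f) - \mrm{cov}(x_f,y)\,K^T - K\,\mrm{cov}(y,x_f) + K\,\mrm{cov}(y)\,K^T.
\end{equation*}
I would then substitute $K = \mrm{cov}(x_f,y)\,\mrm{cov}(y)^{-1}$ into the quadratic term. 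Using self-adjointness of $\mrm{cov}(y)$ and the identity $\mrm{cov}(y,x_f) = \mrm{cov}(x_f,y)^T$, the quadratic term collapses to $K\,\mrm{cov}(x_f,y)^T$, which exactly cancels one of the two cross terms. The remaining cross term $\mrm{cov}(x_f,y)\,K^T$ coincides with $K\,\mrm{cov}(x_f,y)^T$ because both expressions simplify to $\mrm{cov}(x_f,y)\,\mrm{cov}(y)^{-1}\,\mrm{cov}(y,x_f)$, which is manifestly self-adjoint. Substituting back the definition of $K$ yields both forms of Kalman's covariance formula claimed in the theorem.

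The main obstacle is the careful bookkeeping with transposes (more precisely adjoints) of covariance cross-operators in the abstract Hilbert-space setting of $\C{X}$ and $\C{Y}$, rather than the algebra itself, which is essentially a completion of the square. A secondary subtlety arises when $\mrm{cov}(y)$ is singular and the pseudo-inverse is used in place of the inverse: one must verify that the range of $\mrm{cov}(y,x_f)$ lies inside the range of $\mrm{cov}(y)$ so that the identity $\mrm{cov}(y)\,\mrm{cov}(y)^{+}\,\mrm{cov}(y,x_f) = \mrm{cov}(y,x_f)$ still holds. This follows from the standard fact that $\mrm{cov}(x_f,y)$ factors through $\mrm{cov}(y)^{1/2}$ (a consequence of the Cauchy--Schwarz inequality for Hilbert-space-valued RVs), ensuring that the Moore--Penrose calculation reproduces the same formula and the proof goes through verbatim.
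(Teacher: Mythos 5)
Your computation is correct: the paper itself offers no proof of this theorem (it is prefaced only by ``It easy to compute that''), and your direct expansion of $\mrm{cov}(x_f - Ky)$ by bilinearity, followed by substitution of $K=\mrm{cov}(x_f,y)\,\mrm{cov}(y)^{-1}$ so that the quadratic term cancels one cross term and the two cross terms coincide by self-adjointness, is exactly the standard Gauss--Markov calculation the authors intend. Your additional remark on the pseudo-inverse case, using that $\mrm{cov}(x_f,y)$ factors through $\mrm{cov}(y)^{1/2}$, correctly covers the singular case the theorem's companion statement alludes to.
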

This shows that \refeq{eq:update-xa1L} is a true extension of the classical Kalman filter (KF).
Rewriting \refeq{eq:update-xa1L} explicitly in less symbolic notation
\begin{equation} \label{eq:GMKF-RV}
  x_a(\omega) =  x_f(\omega) + 
      \cov(x_f,y)\cov(y)^{-1}(\hat{y} - y(\omega)) ,
\end{equation}
one may see that it is a relation between RVs, and hence some further \emph{stochastic}
discretisation is needed to be numerically implementable.

\section{Nonlinear filters} \label{S:nonlin-filt}
The derivation of nonlinear but polynomial filters is given
in  \citep{HgmEzBvrAlOp15}.  It has the advantage of showing the connection
to the ``Bayes linear'' approach \citep{Goldstein2007}, to the
Gauss-Markov theorem \citep{Luenberger1969}, and to the \emph{Kalman}
filter \citep{Kalman} \citep{Papoulis1998/107}.  Correcting higher moments of
the posterior RV has been touched on in \refSS{correct-higher}, and is not
the topic here.  Now the focus is on computing better than linear
(see \refSS{linear-filter}) approximations to the CE operator, which is
the basic tool for the whole updating and identification process.

We follow \citep{HgmEzBvrAlOp15} for a more general approach not limited to polynomials,
and assume a set of linearly independent measurable functions, not necessarily orthonormal, 
\begin{equation}  \label{eq:basis-L0}
  \C{B} := \{\psi_\alpha \; | \; \alpha \in \C{A},  \;
   \psi_\alpha(y(\omega)) \in \C{S}\}  \subseteq \C{S}_\infty   
\end{equation}
where $\C{A}$ is some countable index set.  Galerkin convergence \citep{HgmEzBvrAlOp15}
will require that
\[ \C{S}_\infty = \overline{\spn}\; \C{B}, \] 
i.e.\ that $\C{B}$ is a Hilbert basis of $\C{S}_\infty$.

Let us consider a general function $\Psi:\C{X}\to\C{R}$ of $x$, where
$\C{R}$ is some Hilbert space, of which we want to compute the conditional
expectation $\EXP{\Psi(x)|y}$.
Denote by $\C{A}_k$ a finite part of $\C{A}$ of cardinality $k$, such
that $\C{A}_k \subset \C{A}_\ell$ for $k<\ell$ and $\bigcup_k \C{A}_k
=\C{A}$, and set
\begin{equation}  \label{eq:def-Rn}
\E{R}_k :=  \C{R} \otimes \C{S}_k \subseteq \E{R}_\infty := \C{R} \otimes \C{S}_\infty,
\end{equation}
where the finite dimensional and hence closed subspaces $\C{S}_k$ are
given by
\begin{equation}  \label{eq:def-Sn}
  \C{S}_k := \spn \{\psi_\alpha \; | \; \alpha \in
      \C{A}_k, \; \psi_\alpha \in \C{B} \} \subseteq \C{S} .
\end{equation}
Observe that the spaces $\E{R}_k$ from \refeq{eq:def-Rn} are $\C{L}$-closed,
see \citep{HgmEzBvrAlOp15}.  In practice, also a ``spatial'' discretisation
of the spaces $\C{X}$ resp.\ $\C{R}$ has to be carried out; but this is
a standard process and will be neglected here for the sake of brevity and
clarity.

For a RV $\Psi(x) \in \E{R} = \C{R} \otimes \C{S}$ we make the following `ansatz' for the
optimal map $\phi_{\Psi,k}$ such that $P_{\E{R}_k}(\Psi(x)) =
\phi_{\Psi,k}(y)$:
\begin{equation}  \label{eq:ansatz-psi}
   \Phi_{\Psi,k}(y) = \sum_{\alpha \in \C{A}_k} v_\alpha \psi_\alpha(y),
\end{equation}
with as yet unknown coefficients $v_\alpha \in \C{R}$.  This is a
normal \emph{Galerkin}-ansatz, and the Galerkin orthogonality
\refeq{eq:var-CE} can be used to determine these coefficients.

Take $\C{Z}_k := \D{R}^{\C{A}_k}$ with canonical basis
$\{\vek{e}_\alpha \; | \; \alpha \in \C{A}_k \}$, and let 
\[ \vek{G}_k
:= (\ip{\psi_\alpha(y(x))}{\psi_\beta(y(x))}_{\C{S}})_{\alpha, \beta
  \in \C{A}_k} \in \E{L}(\C{Z}_k)
\]
be the symmetric positive definite Gram matrix of the basis of
$\C{S}_k$; also set
\begin{align*}
  \tnb{v} &:= \sum_{\alpha \in \C{A}_k} \vek{e}_\alpha \otimes v_\alpha
  \in \C{Z}_k \otimes \C{R}, \\
  \tnb{r} &:= \sum_{\alpha \in \C{A}_k} \vek{e}_\alpha \otimes
  \EXP{\psi_\alpha(y(x)) R(x)} \in \C{Z}_k \otimes \C{R}.
\end{align*}
\begin{thm}
  For any $k \in \D{N}$, the coefficients $\{ v_\alpha\}_{\alpha \in
    \C{A}_k}$ of the optimal map $\phi_{\Psi,k}$ in \refeq{eq:ansatz-psi}
  are given by the unique solution of the Galerkin equation
  \begin{equation} \label{eq:psi-Galerkin}
    (\vek{G}_k \otimes I_{\C{R}}) \tnb{v} = \tnb{r} .
  \end{equation}
It has the formal solution 
\[ \tnb{v} = (\vek{G}_k \otimes I_{\C{R}})^{-1} \tnb{r} = 
    (\vek{G}_k^{-1} \otimes I_{\C{R}}) \tnb{r} \in \C{Z}_k \otimes \C{R} . \]
\end{thm}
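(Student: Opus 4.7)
The plan is to realize this as a standard Galerkin characterization of the orthogonal projection $P_{\E{R}_k}$ applied to $\Psi(x)$, using the $\C{L}$-invariant orthogonality relation \refeq{eq:var-CE} tested against a basis of the trial space $\E{R}_k = \C{R}\otimes\C{S}_k$. Since $\E{R}_k$ is $\C{L}$-closed (as noted after \refeq{eq:def-Sn}) and finite-dimensional in its $\C{S}$-factor, the projection exists, is unique, and is characterized by orthogonality to all of $\E{R}_k$.

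First I would insert the ansatz \refeq{eq:ansatz-psi} into \refeq{eq:var-CE} with test elements of the elementary-tensor form $z = e \otimes \psi_\beta(y)$, where $\beta$ ranges over $\C{A}_k$ and $e$ ranges over $\C{R}$ (or equivalently over a basis of $\C{R}$; by bilinearity and density these elementary tensors span $\E{R}_k$, so they are enough). This produces the scalar orthogonality conditions
\begin{equation*}
  \EXP{\psi_\beta(y)\,\ip{\Psi(x) - \textstyle\sum_{\alpha\in\C{A}_k} v_\alpha \psi_\alpha(y)}{e}_{\C{R}}} = 0
  \quad \text{for all }\beta\in\C{A}_k,\; e\in\C{R}.
\end{equation*}
Since $e$ is arbitrary, this is equivalent to the $\C{R}$-valued equations
\begin{equation*}
  \sum_{\alpha\in\C{A}_k} \EXP{\psi_\beta(y)\psi_\alpha(y)}\, v_\alpha
   = \EXP{\psi_\beta(y)\,\Psi(x)}, \qquad \beta\in\C{A}_k.
\end{equation*}

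Next I would rewrite this system in tensor form. Recognizing the left-hand coefficients as the entries $(\vek{G}_k)_{\beta\alpha}$ of the Gram matrix on $\C{Z}_k = \D{R}^{\C{A}_k}$, and bundling the unknowns into $\tnb{v} = \sum_\alpha \vek{e}_\alpha\otimes v_\alpha$ and the data into $\tnb{r}=\sum_\beta \vek{e}_\beta\otimes \EXP{\psi_\beta(y)\Psi(x)}$, the system reads exactly $(\vek{G}_k\otimes I_{\C{R}})\tnb{v} = \tnb{r}$, which is \refeq{eq:psi-Galerkin}.

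For existence and uniqueness I would use that $\vek{G}_k$ is symmetric positive definite by the linear independence of $\C{B}$ restricted to $\C{A}_k$: for $\vek{c}\in\C{Z}_k\setminus\{0\}$, $\vek{c}^T\vek{G}_k\vek{c} = \EXP{|\sum_\alpha c_\alpha \psi_\alpha(y)|^2}>0$. Hence $\vek{G}_k$ is invertible, and therefore so is $\vek{G}_k\otimes I_{\C{R}}$, with inverse $\vek{G}_k^{-1}\otimes I_{\C{R}}$ by the elementary identity for tensor products of operators. Applying this to $\tnb{r}$ yields the stated formal solution.

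The only mildly delicate point is verifying that testing against elementary tensors $e\otimes\psi_\beta(y)$ really captures the full $\C{L}$-invariant orthogonality on $\E{R}_k$; I would handle this by noting that such elementary tensors span a dense subspace of $\E{R}_k = \C{R}\otimes\C{S}_k$ and that both sides of \refeq{eq:var-CE} are continuous and linear in the test element, so the condition propagates from elementary tensors to all of $\E{R}_k$. Everything else is bookkeeping in the Kronecker/tensor notation, and I would not belabor those routine manipulations.
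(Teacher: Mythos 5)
Your proposal is correct and follows essentially the same route as the paper, which simply states that \refeq{eq:psi-Galerkin} is a direct consequence of the Galerkin orthogonality \refeq{eq:var-CE} and that $\vek{G}_k \otimes I_{\C{R}}$ is positive definite with inverse $\vek{G}_k^{-1}\otimes I_{\C{R}}$. You have merely spelled out the bookkeeping (testing against elementary tensors, identifying the Gram entries, and deriving positive definiteness from the linear independence of $\C{B}$) that the paper leaves implicit.
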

\begin{proof}
  The Galerkin \refeq{eq:psi-Galerkin} is a simple consequence of
  the Galerkin orthogonality \refeq{eq:var-CE}.  As the Gram
  matrix $\vek{G}_k$ and the identity $I_{\C{R}}$ on $\C{R}$ are
  positive definite, so is the tensor operator $(\vek{G}_k \otimes
  I_{\C{R}})$, with inverse $(\vek{G}_k^{-1} \otimes I_{\C{R}})$.
\end{proof}
The block structure of the equations is
clearly visible.  Hence, to solve \refeq{eq:psi-Galerkin}, one only has
to deal with the `small' matrix $\vek{G}_k$.

The update corresponding to \refeq{eq:update-xa1-filt},
using again $\Psi(x) = x$, one obtains
a possibly nonlinear filter based on the basis $\C{B}$:
\begin{equation} \label{eq:GBU-xg}
  x_a \approx x_{a,k} =  x_f + \left( \phi_{x,k}(\hat{y}) - 
  \phi_{x,k}(y(x_f)) \right)  = x_f +  x_{\infty,k}.
\end{equation}
In case that $\C{Y}^* \subseteq \spn \{ \psi_\alpha\}_{\alpha \in
  \C{A}_k}$, i.e.\ the functions with indices in $\C{A}_k$ generate
all the linear functions on
$\C{Y}$, this is a true extension of the Kalman filter.

Observe that this allows one to compute the map in \refeq{eq:CE-DDL} or rather
\refeq{eq:Post-Psi} to any desired accuracy.  Then, using this tool, one may
construct a new random variable which has the desired posterior expectations;
as was started in \refSS{correct-mean} and \refSS{correct-higher}.  This is
then a truly nonlinear extension of the linear filters described in \refS{GMKF},
and one may expect better tracking properties than even for the best linear filters.
This could for example allow for less frequent observations of a dynamical system.

\section{Numerical realisation}   \label{S:num-realisation}
This is only going to be a rough overview on possibilities of numerical realisations.
Only the simplest case of the linear filter will be considered, all other approximations
can be dealt with in an analogous manner.
Essentially we will look at two different kind of approximations, \emph{sampling} and
\emph{functional} or \emph{spectral} approximations.

\subsection{Sampling}        \label{SS:sampling}
As starting point take \refeq{eq:GMKF-RV}.   As it is a relation between RVs, it
certainly also holds for \emph{samples} of the RVs.  Thus it is possible to
take an \emph{ensemble} of sampling points $\omega_1,\dots,\omega_N$ and
require
\begin{equation} \label{eq:EnKF}  \forall \ell = 1,\dots,N:\quad
  \vek{x}_a(\omega_\ell) =  \vek{x}_f(\omega_\ell) + 
      \vek{C}_{x_f y} \vek{C}^{-1}_{y}(\hat{y} - y(\omega_\ell)) ,
\end{equation}
and this is the basis of the \emph{ensemble} Kalman filter, the EnKF \citep{Evensen2009};
the points $\vek{x}_f(\omega_\ell)$ and $\vek{x}_a(\omega_\ell)$ are sometimes
also denoted as \emph{particles}, and \refeq{eq:EnKF} is a simple version of
a \emph{particle filter}.  In \refeq{eq:EnKF}, $\vek{C}_{x_f y}=\cov(x_f,y)$ and
$\vek{C}_{y}=\cov(y)$

Some of the main work for the EnKF consists in obtaining
good estimates of $\vek{C}_{x_f y}$ and $\vek{C}_{y}$, as they have to be computed
from the samples.  Further approximations are possible, for example such as
\emph{assuming} a particular form for $\vek{C}_{x_f y}$ and $\vek{C}_{y}$.
This is the basis for methods like \emph{kriging} and \emph{3DVAR} resp.\ \emph{4DVAR},
where one works with an approximate Kalman gain $\vtil{K} \approx \vek{K}$.
For a recent account see \citep{KellyLawStuart-2014}.

\subsection{Functional approximation}        \label{SS:func-approx}
Here we want to pursue a different tack, and want to discretise RVs not through
their samples, but by \emph{functional} resp.\ \emph{spectral approximations} 
\citep{matthies6, xiu2010numerical, LeMatre10}.  This means that all RVs,
say $\vek{v}(\omega)$, are described as functions of \emph{known} RVs
$\{\xi_1(\omega),\dots,\xi_\ell(\omega),\dots\}$.  Often, when for example
stochastic processes or random fields are involved,  one has to deal here
with \emph{infinitely} many RVs, which for an actual computation have to be
truncated to a finte vector $\vek{\xi}(\omega)=[\xi_1(\omega),\dots,\xi_n(\omega)]$
of significant RVs.  We shall assume that these have been chosen such as to be
independent.  As we want to approximate later $\vek{x}=[x_1,\dots,x_n]$, we
do not need more than $n$ RVs $\vek{\xi}$.

One further chooses a finite set of linearly independent functions 
$\{\psi_\alpha\}_{\alpha\in\C{J}_M}$ of the variables $\vek{\xi}(\omega)$, where
the index $\alpha$ often is a \emph{multi-index}, and the set $\C{J}_M$ is a finite
set with cardinality (size) $M$.  Many different systems of functions can be
used, classical choices are \citep{matthies6,  xiu2010numerical, LeMatre10} multivariate
polynomials --- leading to the \emph{polynomial chaos expansion} (PCE), as well as
trigonometric functions, kernel functions as in kriging, radial basis functions,
sigmoidal functions as in artificial neural networks (ANNs), or functions derived
from fuzzy sets.  The particular choice is immaterial for the further development.
But to obtain results which match the above theory as regards $\C{L}$-invariant
subspaces, we shall assume that the set $\{\psi_\alpha\}_{\alpha\in\C{J}_M}$ includes
all the \emph{linear} functions of $\vek{\xi}$.  This is easy to achieve with
polynomials, and w.r.t\ kriging it corresponds to \emph{universal} kriging.
All other functions systems can also be augmented by a linear trend.

Thus a RV $\vek{v}(\omega)$ would be replaced by a functional approximation
\begin{equation}  \label{eq:FA}
   \vek{v}(\omega) = \sum_{\alpha\in\C{J}_M} \vek{v}_\alpha 
     \psi_\alpha(\vek{\xi}(\omega)) = \sum_{\alpha\in\C{J}_M} \vek{v}_\alpha 
     \psi_\alpha(\vek{\xi}) = \vek{v}(\vek{\xi}) .
\end{equation}
The argument $\omega$ will be omitted from here on, as we transport the probability
measure $\D{P}$ on $\Omega$ to $\vek{\Xi}=\Xi_1 \times \dots \times \Xi_n$, the range
of $\vek{\xi}$, giving $\D{P}_{\xi} = \D{P}_1 \times \dots \times \D{P}_n$ as
a product measure,
where $\D{P}_\ell = (\xi_\ell)_* \D{P}$ is the distribution measure of the
RV $\xi_\ell$, as the RVs $\xi_\ell$ are independent.  All computations
from here on are performed on $\vek{\Xi}$, typically some subset of $\D{R}^n$.
Hence $n$ is the dimension of our problem, and if $n$ is large, one faces a
high-dimensional problem.  It is here that low-rank tensor approximations
\citep{Hackbusch_tensor} become practically important.

It is not too difficult to see that the linear filter when applied to the spectral
approximation has exactly the same form as shown in \refeq{eq:GMKF-RV}.  Hence the
basic formula \refeq{eq:GMKF-RV} looks formally the same in both cases, once it is
applied to samples or ``particles'', in the other case to the functional approximation
of RVs, i.e.\ to the coefficients in \refeq{eq:FA}.

In both of the cases described here in \refSS{sampling} and in this \refSS{func-approx},
the question as how to compute the covariance matrices in \refeq{eq:GMKF-RV} arises.
In the EnKF in \refSS{sampling} they have to be computed from the samples \citep{Evensen2009},
and in the case of functional resp.\ spectral approximations they can be computed
from the coefficients in \refeq{eq:FA}, see \citep{opBvrAlHgm12, BvrAkJsOpHgm11}.

In the sampling context, the samples or particles may be seen as $\updelta$-measures,
and one generally obtains weak-$*$ convergence of convex combinations of these
$\updelta$-measures to the continuous limit as the number of particles increases.
In the case of functional resp.\ spectral approximation
one can bring the whole theory of Galerkin-approximations to bear on the problem,
and one may obtain convergence of the involved RVs in appropriate norms \citep{HgmEzBvrAlOp15}.
We leave this topic with this pointer to the literature, as this is too extensive
to be discussed any further and hence is beyond the scope of the present work.

\section{Examples}   \label{S:examples}
\begin{figure}[!ht]
\centering
 \includegraphics[width=0.9\textwidth,height=0.35\textheight]{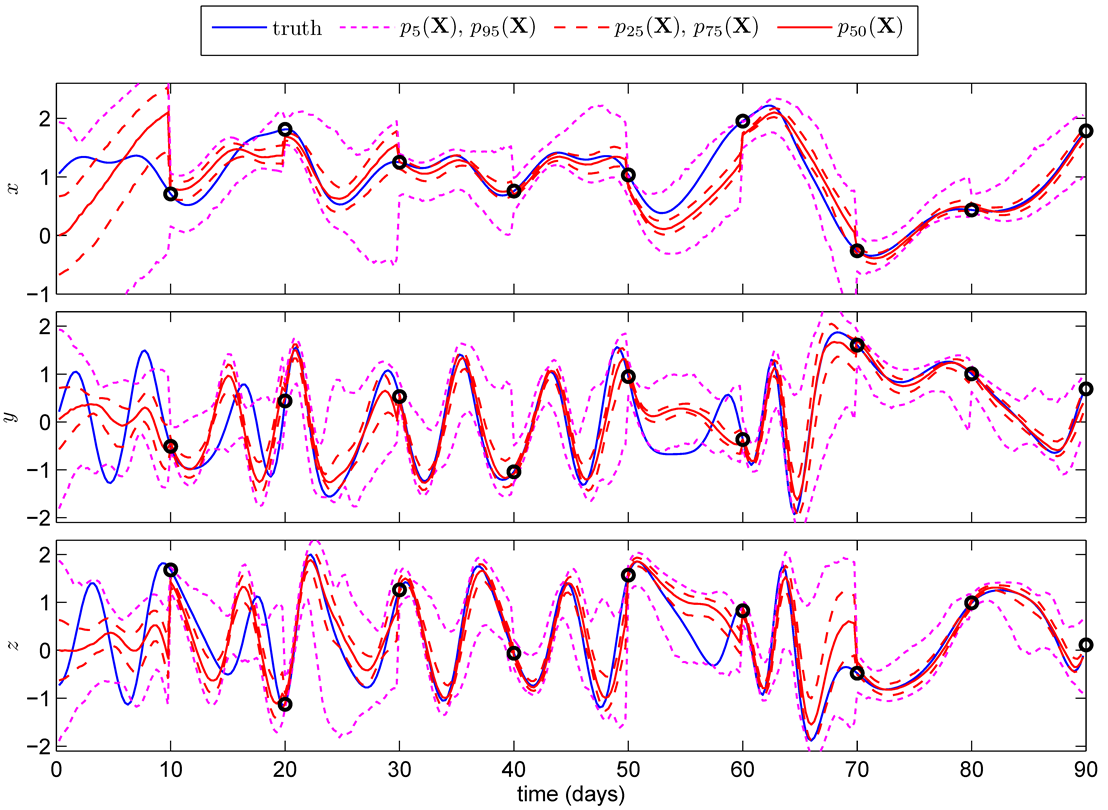}
 \caption[Identification of Lorenz-84 model]{Time evolution of the Lorenz-84 model with
 state identification with the LBU, from \citep{opBvrAlHgm12}.  For the estimated state
 uncertainty the $50\%$ (full line), $\pm 25\%$, and $\pm 45\%$ quantiles are shown.}
\label{F:exp-B-1}
\end{figure}
The first example is a dynamic system considered in \citep{opBvrAlHgm12},
it is the well-known Lorenz-84 chaotic model, a system of three nonlinear
ordinary differential equations operating in the chaotic regime. This
is an example along the description of \refeq{eq:abstr-sys-I} and
\refeq{eq:obs-I} in \refSS{data-m}.  Remember that this was
originally a model to describe the evolution of some amplitudes of a
spherical harmonic expansion of variables describing world climate.
As the original scaling of the variables has been kept, the time axis
in \refig{exp-B-1} is in \emph{days}.  Every ten days a noisy
measurement is performed and the state description is updated.  In
between the state description evolves according to the chaotic dynamic
of the system.  One may observe from \refig{exp-B-1} how the
uncertainty---the width of the distribution as given by the quantile
lines---shrinks every time a measurement is performed, and then
increases again due to the chaotic and hence noisy dynamics.  Of
course, we did not really measure world climate, but rather simulated
the ``truth'' as well, i.e.\ a \emph{virtual} experiment, like the
others to follow.  More details may be found in \citep{opBvrAlHgm12}
and the references therein.  All computations are performed in a
functional approximation with  polynomial chaos expansions as
alluded to in \refSS{func-approx}, and the filter is linear according to \refeq{eq:GMKF-RV}.

To introduce the nonlinear filter as sketched in \refS{nonlin-filt},
where for the nonlinear filter the functions in \refeq{eq:ansatz-psi} included
polynomials up to quadratic terms, one may
look shortly at a very simplified example, identifying a value, where
only the third power of the value plus a Gaussian error RV is observed.
All updates follow \refeq{eq:update-xa1}, but the update map is computed
with different accuracy.
\begin{figure}[!ht]
\centering
 \includegraphics[width=0.6\textwidth]{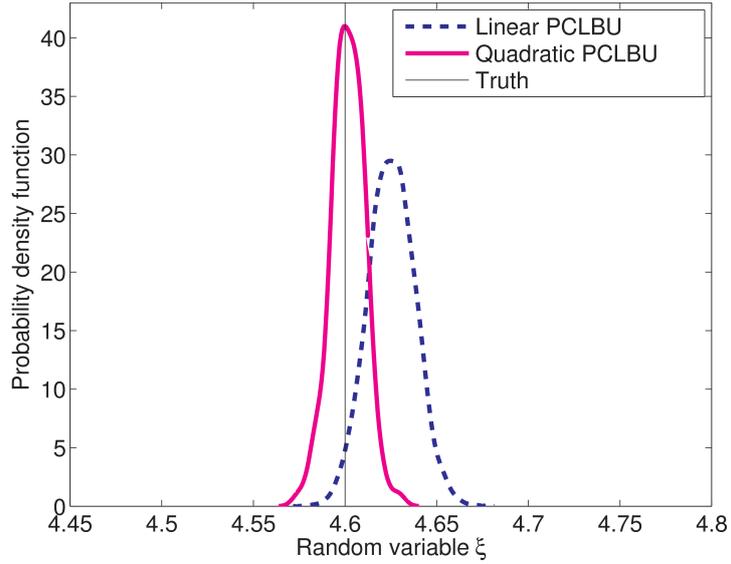}
 \caption[Updates for cubic observations]{Perturbed observations of the cube of a RV, different
 updates --- linear, and quadratic update}
\label{F:exp-B-2}
\end{figure}
Shown are the pdfs produced by the linear filter according to \refeq{eq:GMKF-RV}
--- Linear polynomial chaos Bayesian update (Linear PCBU) ---
a special form of \refeq{eq:update-xa1},
and using polynomials up to order two, the quadratic polynomial chaos
Bayesian update (QPCBU).  One may observe that due to the nonlinear observation,
the differences between the linear filters and the quadratic one are already significant,
the QPCBU yielding a better update.

We go back to the example shown in \refig{exp-B-1}, but now consider
only for one step a nonlinear filter like in \refig{exp-B-2},
see \citep{HgmEzBvrAlOp15}.
\begin{figure}[!ht]
\centering
 \includegraphics[width=0.9\textwidth,height=0.2\textheight]{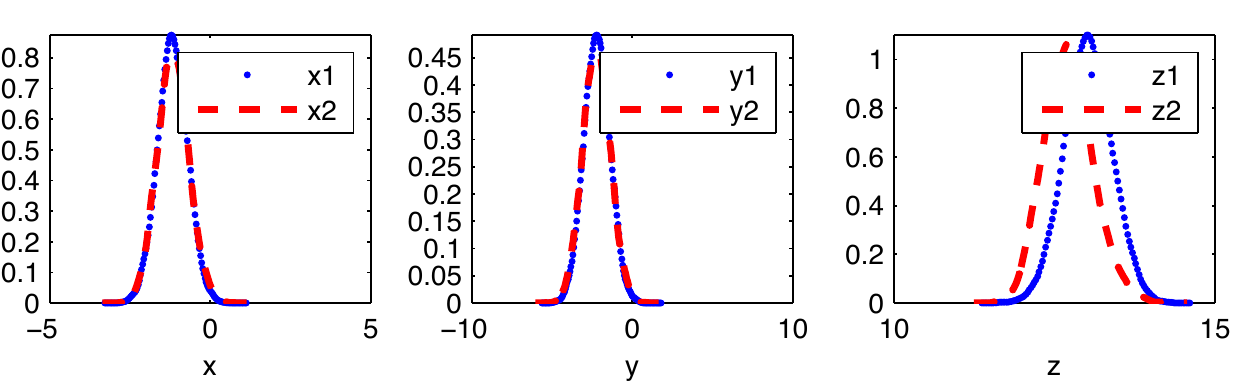}
 \caption[Lorenz-84, linear observations with different updates]{Lorenz-84 model, perturbed
 linear observations of the state: Posterior for LBU and 
 QBU after one update, from \citep{HgmEzBvrAlOp15}}
\label{F:exp-B-3}
\end{figure}
As a first set of experiments we take the measurement operator to be
linear in the state variable to be identified, i.e.\ we can observe the \emph{whole} state
directly.  At the moment we consider updates after each day---whereas
in \refig{exp-B-1} the updates were performed every 10 days.  The
update is done once with the linear Bayesian update (LBU), and again
with a \emph{quadratic} nonlinear BU (QBU).  The results
for the posterior pdfs are given in \refig{exp-B-3}, where the
linear update is dotted in blue and labelled $z1$, and the full red line is the
quadratic QBU labelled $z2$; there is hardly any difference between the two
except for the $z$-component of the state,  most
probably indicating that the LBU is already very accurate.

Now the same experiment, but the measurement operator is cubic:
\begin{figure}[!ht]
\centering
 \includegraphics[width=0.9\textwidth,height=0.2\textheight]{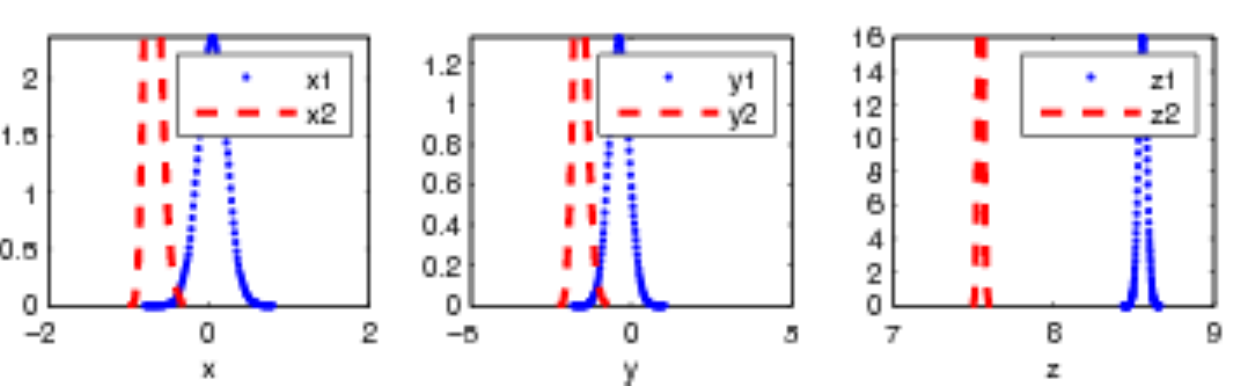}
 \caption[Lorenz-84, cubic observations with different updates]{Lorenz-84 model, perturbed
 cubic observations of the state: Posterior for LBU and 
 QBU after one update, from \citep{HgmEzBvrAlOp15}}
\label{F:exp-B-4}
\end{figure}
These differences in posterior pdfs after one update may be gleaned 
from \refig{exp-B-4}, and they are indeed larger
than in the linear case \refig{exp-B-3}, due to the strongly nonlinear
measurement operator, showing that the QBU may provide much more
accurate tracking of the state, especially for non-linear observation
operators.

\begin{figure}[!ht]
\centering
\begin{minipage}{0.47\textwidth}
  \centering
  \includegraphics[width=0.99\linewidth,height=0.26\textheight]{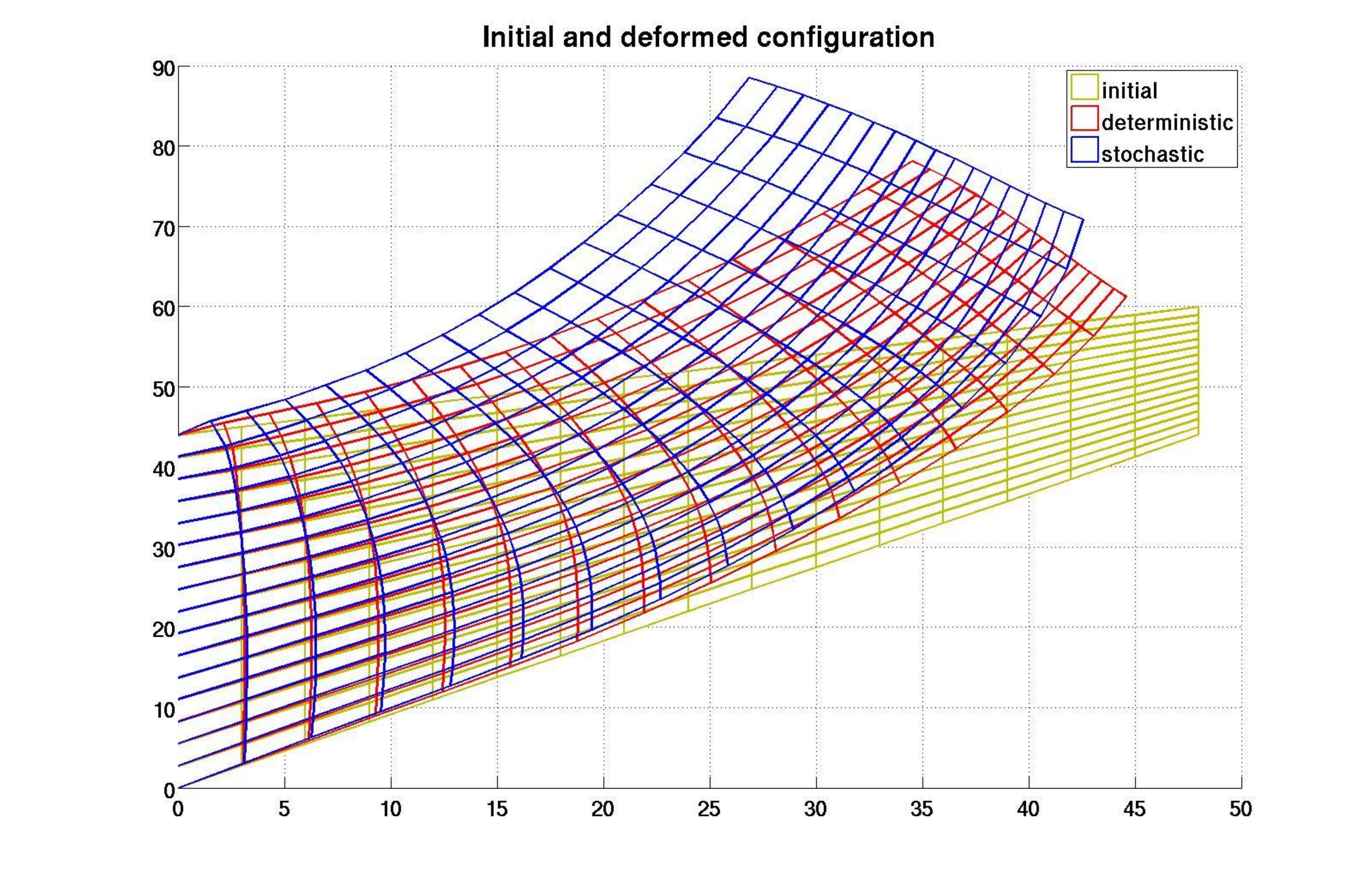}
  \caption[Cook's membrane --- large strain elasto-plasticity]{Cook's membrane --- large
  strain elasto-plasticity, undeformed grid [initial], deformations with mean properties
  [deterministic], and mean of the deformation with stochastic properties [stochastic], from \citep{BvrAkJsOpHgm11}, \citep{rosic2013hgm}, \citep{HgmEzBvrAlOp15}}
  \label{F:exp-B-5}
\end{minipage}%
\hfill
\begin{minipage}{0.47\textwidth}
  \centering
  \includegraphics[width=0.99\linewidth]{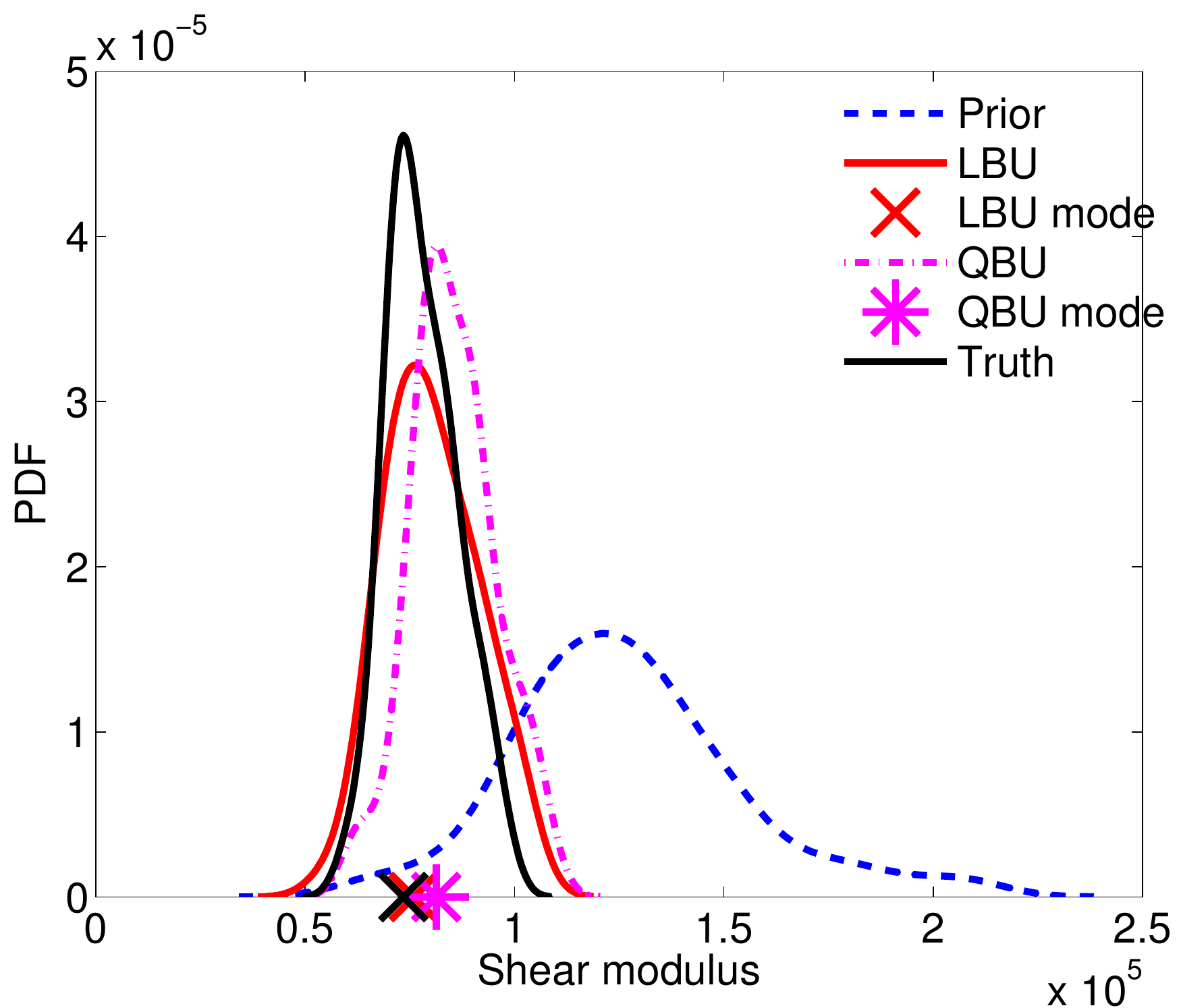}
  \caption[Cook's membrane --- Update of shear modulus]{Cook's membrane --- large
  strain elasto-plasticity, perturbed linear observations of the deformation,
  LBU and QBU for the shear modulus, from \citep{HgmEzBvrAlOp15}}
  \label{F:exp-B-6}
\end{minipage}
\end{figure}
As a last example we follow \citep{HgmEzBvrAlOp15} and take a strongly nonlinear and also
non-smooth situation, namely elasto-plasticity with linear hardening
and large deformations and a \emph{Kirchhoff-St.\ Venant} elastic
material law \citep{BvrAkJsOpHgm11}, \citep{rosic2013hgm}.  This
example is known as \emph{Cook's membrane}, and is shown in
\refig{exp-B-5} with the undeformed mesh (initial), the deformed one
obtained by computing with average values of the elasticity and
plasticity material constants (deterministic), and finally the average
result from a stochastic forward calculation of the probabilistic
model (stochastic), which is described by a variational inequality
\citep{rosic2013hgm}.

The shear modulus $G$, a random field and not a deterministic value in
this case, has to be identified, which is made more difficult by the
non-smooth non-linearity.  In \refig{exp-B-6} one may see the `true'
distribution at one point in the domain in an unbroken black line,
with the mode --- the maximum of the pdf --- marked by a black cross
on the abscissa, whereas the prior is shown in a dotted blue line.
The pdf of the LBU is shown in an unbroken red line, with its mode
marked by a red cross, and the pdf of the QBU is shown in a broken
purple line with its mode marked by an asterisk.  Again we see a
difference between the LBU and the QBU.  But here a curious thing
happens; the mode of the LBU-posterior is actually closer to the mode
of the `truth' than the mode of the QBU-posterior.  This means that
somehow the QBU takes the prior more into account than the LBU, which
is a kind of overshooting which has been observed at other occasions.
On the other hand the pdf of the QBU is narrower --- has less
uncertainty --- than the pdf of the LBU.

\section{Conclusion}   \label{S:conclusion}
A general approach for state and parameter estimation has been presented in a Bayesian framework.
The Bayesian approach is based here on the conditional expectation (CE) operator,
and different approximations were discussed, where the linear approximation leads to a
generalisation of the well-known Kalman filter (KF), and is here termed the Gauss-Markov-Kalman
filter (GMKF), as it is based on the classical Gauss-Markov theorem.
Based on the CE operator, various approximations
to construct a RV with the proper posterior distribution were shown, where just correcting
for the mean is certainly the simplest type of filter, and also the basis of the GMKF.

Actual numerical computations typically require a discretisation of both the spatial variables
--- something which is practically independent of the considerations here --- and the
stochastic variables.  Classical are sampling methods, but here the use of spectral resp.\
functional approximations is alluded to, and all computations in the examples shown
are carried out with functional approximations.

%\begin{thebibliography}{99}

\providecommand{\bysame}{\leavevmode\hbox to3em{\hrulefill}\thinspace}
\providecommand{\MR}{\relax\ifhmode\unskip\space\fi MR }
% \MRhref is called by the amsart/book/proc definition of \MR.
\providecommand{\MRhref}[2]{%
  \href{http://www.ams.org/mathscinet-getitem?mr=#1}{#2}
}
\providecommand{\href}[2]{#2}

%\end{thebibliography}

%\bibliography{\thebib/jabbrevlong,\thebib/matthies_BU_paper-1,\thebib/phys_D,\thebib/fa,\thebib/risk,\thebib/fuq-new,\thebib/highdim,\thebib/filtering,\thebib/inverse}

%{ %\color{gray9}
%   \tiny
%       \texttt{\RCSId} 
%   }
%

\end{document}